\journal{Elsevier}
\newtheorem{definition}{Definition}[section]
\newtheorem{proposition}[definition]{Proposition}
\newtheorem{lemma}[definition]{Lemma}
\newtheorem{theorem}[definition]{Theorem}
\newtheorem{corollary}[definition]{Corollary}
\newdefinition{example}[definition]{Example}
\newdefinition{remark}[definition]{ Remark}
\newdefinition{problem}[definition]{ Problem}
\newdefinition{question}[definition]{Question}
\newdefinition{fact}[definition]{Fact}
\newproof{pot}{Proof}
\begin{document}

\begin{frontmatter}
\title{Uniqueness of the hyperspaces $C(p,X)$ in the class of trees}

\author{Florencio~Corona--V\'azquez  \corref{cor1}}
\ead{florencio.corona@unach.mx}

\author{Russell~Aar\'on~Qui\~nones--Estrella}
\ead{rusell.quinones@unach.mx}

\author{Javier~S\'anchez--Mart\'inez}
\ead{jsanchezm@unach.mx}

\author{Rosemberg Toal\'a--Enr\'iquez}
\ead{rosembergtoala@gmail.com}

\cortext[cor1]{Corresponding author}

\address{Universidad Aut\'onoma de Chiapas, Facultad de Ciencias en F\'isica y Matem\'aticas, Carretera Emiliano Zapata Km. 8, Rancho San Francisco, Ter\'an, C.P. 29050, Tuxtla Guti\'errez, Chiapas, M\'exico.}

\begin{abstract}
Given a continuum $X$ and $p\in X$, we will consider the hyperspace $C(p,X)$ of all subcontinua of $X$ containing $p$. Given a family of continua $\mathcal{C}$, a continuum $X\in\mathcal{C}$ and $p\in X$, we say that $(X,p)$ has unique hyperspace $C(p,X)$ relative to $\mathcal{C}$ if for each $Y\in\mathcal{C}$ and $q\in Y$ such that $C(p,X)$ and $C(q,Y)$ are homeomorphic, then there is an homeomorphism between $X$ and $Y$ sending $p$ to $q$. 
In this paper we study some topological and geometric properties about the structure of $C(p,X)$ when $X$ is a tree, being the main result that $(X,p)$ has unique hyperspace $C(p,X)$ relative to the class of trees. 
\end{abstract}

\begin{keyword}
Continua  \sep hyperspaces \sep  trees.
\MSC Primary 
\sep 54B05 
\sep 54B20 
\sep 54F65 
\end{keyword}

\end{frontmatter}
\section{Introduction}
A \textit{continuum} is a nonempty compact connected metric space. Given a continuum $X$, by a \textit{hyperspace} of $X$ we mean a specified collection of subsets of $X$. In the literature, some of the most studied hyperspaces are the following:   
\begin{align*}
2^{X} & = \{ A \subset X : A \text{ is nonempty and closed}\},        \\
C(X) & = \{ A \subset X : A \text{ is nonempty, connected and closed}\}, \\
F_n(X) & = \{A\subset X : A\textrm{ has at most $n$ points}\} \textrm{, where $n\in\mathbb{N}$}, \\
C(p,X) & = \{A\in C(X) : p\in A\} \textrm{, where $p\in X$}.
\end{align*}

The collection $2^{X}$ is called the \textit{hyperspace of closed subsets} of $X$ whereas that $C(X)$ is called the \textit{hyperspace of subcontinua} of $X$. These hyperspaces are considered with the Hausdorff metric, see \cite[p. 1]{Nadler(1978)}.

Given a finite collection $K_{1},\dots,K_{r}$ of subsets of $X$, $\left\langle K_{1},\dots,K_{r}\right\rangle$, denotes the following subset of $2^X$: $$\left\{A\in 2^X:A\subset \displaystyle\bigcup^{r}_{i=1}K_{i}, A\cap K_{i}\neq\emptyset\textrm{ for each }i\in\left\{1,\dots,r\right\}\right\}.$$ It is known that the family of all subsets of $2^X$ of the form $\left\langle K_{1},\dots,K_{r}\right\rangle$, where each $K_{i}$ is an open subset of $X$, forms a basis for a topology for $2^X$ (see \cite[Theorem 0.11, p. 9]{Nadler(1978)}) called the \textit{Vietoris Topology}. The Vietoris topology and the topology induced by the Hausdorff metric coincide (see \cite[Theorem 0.13, p.~10]{Nadler(1978)}). The hyperspaces $C(X)$, $F_{n}(X)$ and $C(p,X)$ are considered as subspaces of $2^{X}$.

The topological structure of the hyperspaces $C(p,X)$ has been recently studied, for example, in \cite{CESV(2018)}, \cite{Eberhart(1978)}, \cite{Martinez(2007)}, \cite{Pellicer(2003)}, \cite{Pellicer(2005)}, and \cite{Pellicer(2005b)}, being useful to characterize classes of continua.

In general, given a hyperspace $\mathcal{H}(X)\in \{2^{X},C(X),F_{n}(X)\}$, we say that \textit{$X$ has unique hyperspace $\mathcal{H}(X)$} if for each continuum $Y$ such that $\mathcal{H}(X)$ is homeomorphic to $\mathcal{H}(Y)$, it holds that $X$ is homeomorphic to $Y$. This concept has served to characterize classes of continua through the structure of their hyperspaces, see for example \cite{Herrera(2007)}, \cite{Herrera(2009)}, \cite{Illanes(2003)} and \cite{Illanes(2013)}.
 
In a similar setting, given a family of continua $\mathcal{C}$, a continuum $X\in\mathcal{C}$ and $p\in X$, we say that $(X,p)$ \textit{has unique hyperspace $C(p,X)$ in (or relative to) $\mathcal{C}$} if for each $Y\in\mathcal{C}$ and $q\in Y$ such that $C(p,X)$ is homeomorphic to $C(q,Y)$, there is an homeomorphism $\mathfrak{h}:X\to Y$ such that $\mathfrak h(p)=q$ (compare with the continua \textit{$\mathcal{C}$-determined} given in \cite[p. 12]{Illanes(2002)} or \citep{Macias(1997)}). The main goal in this paper is to show that for each tree $X$ and 
$p\in X$, $(X,p)$ has unique hyperspace $C(p,X)$ in the class of trees (see Theorem \ref{principal}). 

The following examples show that the uniqueness of hyperspace $C(p,X)$ in the general setting is false, being our results the best that can be obtained in the class of trees. 

\begin{example}
In \cite[Lemma 3.19]{Pellicer(2003)} it was proved that a continuum $Y$ is hereditarily indecomposable if and only if for each $q\in Y$, $C(q,Y)$ is an arc. Also in \cite[Theorem 3.17]{Pellicer(2003)} it was proved that if $X$ is an arc and $p\in E(X)$, $C(p,X)$ is an arc. It follows that $(X,p)$ has no unique hyperspace in the class of continua.
\end{example}

\begin{example}
By using \cite[Observation 5.18 and Theorem 5.21]{Pellicer(2003)}, is easy to see that the Knaster continuum $Z$ with two end points $a$ and $b$ (see \cite[p. 205]{Kuratowski(1968)}) satisfy that $C(a,Z)$ and $C(b,Z)$ are arcs and $C(q,Z)$ is a 2-cell for each $q\in Z\setminus \{a,b\}$. This continuum shows that if $X$ is an arc and $p\in X$, then $(X,p)$ has no unique hyperspace in the class of continua.
\end{example}

\begin{example}\label{todoslosarboles}
If $X$ is a tree and $p\in X$, then there exist a continuum $Y$ and $q\in Y$, such that $Y$ is not a tree and $C(p,X)$ is homeomorphic to $C(q,Y)$. \\
To get such a continuum, let $Z$  be a Knaster continuum with two end points $a$ and $b$ and let $T$ be a simple triod with vertex $p\in T$. Choose an end point $x\in T$ and  attach $Z$ to $T$ by identifying $x$ with $a$. Let $Y$ be the space we get, $q:=[p]\in Y$. It is not difficult to see that $C(p,T)$ is homeomorphic to $C(q,Y)$. This shows that trees have no unique hyperspace $C(p,X)$ in the class of continua.
\end{example}

\begin{example}
If $X$ is an arc with end points $a$ and $b$, and $Y$ is a simple closed curve, then $C(p,X)$ is homeomorphic to $C(q,Y)$ for each $p\in X\setminus \{a,b\}$ and $q\in Y$. This shows that if $p\in X\setminus \{a,b\}$ then $(X,p)$ has no unique hyperspace in the class of finite graphs.
\end{example}

\section{Definitions and preliminaries}

By a \textit{finite graph} we mean a continuum $X$ which can be written as the union of finitely many arcs, any two of which are either disjoint or intersect only in one or both of their end points. A \textit{tree} is a finite graph without simple closed curves. Given a positive integer $n$,  a \textit{simple $n$-od} is a finite graph, denoted by $T_n$, which is the union of $n$ arcs emanating from a single point, $v$, and  otherwise disjoint from each another. The point $v$ is called the \textit{vertex} of the simple $n$-od. A simple $3$-od, $T_{3}$, will be called a \textit{simple triod}.  A $n$\textit{-cell}, $I^{n}$, is any space homeomorphic to $[0,1]^n$. For a subset $A$ of $X$ we denote by $|A|$ the cardinality of $A$ and we use the symbols $\textrm{int}(A)$ and $\overline{A}$ to denotes the interior and the clousure of $A$ in $X$, respectively. 

Given a finite graph $X$, $p\in X$ and a positive integer $n$, we say that $p$ \textit{is of order $n$ in $X$}, denoted by 
ord$(p,X)=n$, if $p$ has a closed neighborhood which is homeomorphic to a simple $n$-od having $p$ as the vertex. 
If ord$(p,X)=1$ the point $p$ is called an \textit{end point of $X$}. The set of all end points of $X$ will be denoted by $E(X)$. If ord$(p,X)=2$ the point $p$ is called an \textit{ordinary point of $X$}. The set of all ordinary points of $X$ will be denoted by $O(X)$. A point $p\in X$ is a \textit{ramification point of $X$} if ord$(p,X)\geq 3$. The set of all ramification points of $X$ will be denoted by $R(X)$. The \textit{vertices} of a finite graph $X$ will be the end points and the ramification points of $X$, we denote by $V(X)$ the set of all vertices of $X$. An \textit{edge} will be an arc joining two elements of $V(X)$ and containing no more than two points of that set; the set of edges of $X$ will be denoted by $\mbox{edge}(X)$.

%
A \emph{subtree} $Y$ of the tree $X$ is a subcontinuum of $X$ such that if $e\in \mbox{edge}(Y)$ then $e\in \mbox{edge}(X)$, i.e., every edge in $Y$ should be an edge in $X$, it is avoid to use only a part of an edge of $X$ when constructing a subtree $Y$ of $X$.

In this paper, \textit{dimension} means inductive dimension as defined in \cite[(0.44), p. 21]{Nadler(1978)}. The symbol $\dim(X)$ will be used to denote the dimension of the space $X$.

\section{About the cells in $C(p,X)$}
From now on $X$ will denote a tree and $p\in X$.\\ 

Given $n\in\mathbb{N}$, we denote by $U_n(X)$ the following set 
$$ \left\{ A\in C(p,X): \textrm{$A$ has a neighborhood in $C(p,X)$ homeomorphic to $I^n$}\right\},$$
we set $\mathcal{U} (X)=\bigcup \limits _{n\in\mathbb{N}}U_{n}(X)$, and as customary, $\pi _{0}(\mathcal U(X))$ denote the set of connected components of  $\mathcal U(X)$. 
 
Given a tree $X$, which is not an arc nor a simple $n$-od, we consider the  subcontinuum  
$$T(X)=\bigcup \left\{ e\in \textrm{edge($X$) : $e\cap E(X)=\emptyset$}\right\}.$$
For each $x\in E(X)$ there is exactly one point $r_x\in R(X)$ such that the
arc joining $x$ with $r_{x}$, $\overline{xr_x}$, is an edge of $X$. With this notation the continuum $T(X)$ can be described as 
the closure of
\[
 X\setminus \bigcup \limits _{x\in E(X)} \overline{xr_x}.
\]
The key idea is to remove all arcs incident in any end point of $X$ (see the picture below).

It is clear that the following statements hold.
\begin{itemize}
 \item $T(X)$ is a subtree of $X$ and $R(T(X))\subset R(X)\subset T(X)$.
 \item $x\in R(X)$ is stills a ramification point of $T(X)$ if and only if there is at least three different 
elements $y,z,w \in R(X)$ such that the arcs $\overline{yx}, \overline{zx}$ and $\overline{wx}$ are  edges of $X$.
\item For each $z\in E(T(X))$ holds 
\[
\left| \{ e\in \mbox{edge}(X)\setminus \mbox{edge}(T(X)): \text{$z$ is an end point of $e$}\} \right|\geq 2.
\]
\end{itemize}

\[
\begin{tikzpicture}[thick, scale=0.8]
\node at (-0.5,1.5){Tree $X$};
\coordinate (a) at (0,0);
\coordinate (e) at (-1,-1);
\coordinate (b) at (0:2cm);
\coordinate (c) at ($(b)+(1,-1)$);
\coordinate (d) at ($(b)+(0,2)$);
\coordinate (f) at (180: 2cm);
\foreach \x in {a,b,c,d,e,f}{
  \draw node at (\x){\tiny $\bullet$};
}
\foreach \ang in {0,45,90,135}{
  \draw[shift=(d)] (0,0) -- (\ang : 1cm); 
  \draw[shift=(c)] (0,0) -- (-\ang : 1cm);
  }
\draw (a)--(b) (a)--(e) (b) -- (c) (b) --(d) (e) -- (f); 
\foreach \c in{a,e,f}{
    \draw[shift=(\c)] (0,0) -- (90 :1cm);
}
\foreach \c in{e,f}{
    \draw[shift=(\c)] (0,0) -- (180 :1cm);
}
\foreach \c in{a,b,e}{
    \draw[shift=(\c)] (0,0) -- (-90 :1cm) (0,0) -- (-45 :1cm);
    }
\draw[shift=(b)] (0,0) -- (30:1cm) (0,0) -- (60:1cm);
\end{tikzpicture}
\hspace{1cm}
\begin{tikzpicture}[thick, scale=0.8]
\node at (-0.5,1.5){Tree $T(X)$};
\coordinate (a) at (0,0);
\coordinate (e) at (-1,-1);
\coordinate (b) at (0:2cm);
\coordinate (c) at ($(b)+(1,-1)$);
\coordinate (d) at ($(b)+(0,2)$);
\coordinate (f) at (180: 2cm);
\draw[above left] node at (b){$x$};
\draw[left] node at (a){$y$};
\draw[below left] node at (d){$z$};
\draw[above] node at (c){$w$};
\draw (a)--(b) (a)--(e) (b) -- (c) (b) --(d) (e) -- (f); 
\foreach \x in {a,b,c,d,e,f}{
  \draw node at (\x){\tiny $\bullet$};
}
\begin{scope}[dotted]
\foreach \ang in {0,45,90,135}{
  \draw[shift=(d)] (0,0) -- (\ang : 1cm); 
  \draw[shift=(c)] (0,0) -- (-\ang : 1cm);
  }
\foreach \c in{a,e,f}{
    \draw[shift=(\c)] (0,0) -- (90 :1cm);
}
\foreach \c in{e,f}{
    \draw[shift=(\c)] (0,0) -- (180 :1cm);
}
\foreach \c in{a,b,e}{
    \draw[shift=(\c)] (0,0) -- (-90 :1cm) (0,0) -- (-45 :1cm);
    }
\draw[shift=(b)] (0,0) -- (30:1cm) (0,0) -- (60:1cm);
\end{scope}
\end{tikzpicture}
\]
Let $\mbox{Sub}_{p}(T(X))$ the collection of all subtrees of $T(X)$ containing $p$. One of the main goals of this section is to establish a one to one corres\-pondence between $\mbox{Sub}_p(T(X))$ and the components of $\mathcal U(X)$.  In order to achieve this, let $Y\in \mbox{Sub}_{p}(T(X))$ and for each $e\in \mbox{edge}(X)\setminus \mbox{edge}(Y)$ such that $e\cap Y\neq \emptyset$, the intersection $e\cap Y$ consist of exactly one point, so we can pick a homeomorphism  $h_{e}: [0,1]\rightarrow e$ in such a way that $h_{e}(0)\in Y$.\\

We define the following set of $C(p,X)$

\[
U_{Y}:= \left\{ Y\cup \bigcup h_{e}(t_{e}):
e\in \mbox{edge}(X)\setminus \mbox{edge}(Y),\ e\cap Y\neq \emptyset,\ t_{e}\in (0,1) \right\}.
\]
Observe that $U_{Y}\subset C(Y,X):=\{A\in C(X): Y\subset A\}$. On the other hand, notice that if the set $\{e\in \mbox{edge}(X)\setminus \mbox{edge}(Y),\ e\cap Y\neq \emptyset\}$ is numbered as $\{e_{1},\dots, e_{k}\}$ then $U_{Y}=\{Y\cup A \in C(X):A\in \langle \textrm{int}(e_{1}),\dots,\textrm{int}(e_{k})\rangle\}$, thus $U_{Y}$ is homeomorphic to $\prod^{k}_{i=1}\textrm{int}(e_{i})$; these two facts shows the following result.

\begin{proposition}\label{components}
Let $Y\in \textup{Sub}_{p}(T(X))$, $A\in U_{Y}$ and $n=|E(A)|$. Then
\begin{enumerate}[a)]
	\item $n=|\{ e\in \textup{edge}(X)\setminus \textup{edge}(Y):e\cap Y\neq \emptyset\}|$ and $U_{Y}\subset U_{n}(X)$.
	\item For each $B\in U_{Y}$, $Y=\bigcup \{A\in \textup{Sub}_{p}(T(X)):A\subset B\}$, consequently, $Y$ is the largest element in $\textup{Sub}_{p}(T(X))$ contained in $B$.
	\item If $Y'\in \textup{Sub}_{p}(T(X))$ is such that $U_{Y}\cap U_{Y'}\neq \emptyset$, then $Y=Y'$.
	\item $U_{Y}$ is open and connected in $\mathcal U(X)$.
\end{enumerate}
\end{proposition}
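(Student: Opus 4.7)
The plan is to prove (a) by a direct identification of $E(A)$, to prove (b) by observing that the full edges of $T(X)$ lying inside $B$ are exactly the edges of $Y$, to derive (c) from (b), and to read off (d) from the homeomorphism $U_Y\cong \prod_{i=1}^k \textup{int}(e_i)$ exhibited just before the statement.

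For part (a) I would write a generic $A\in U_Y$ as $A = Y \cup \bigcup_{i=1}^k h_{e_i}([0,t_{e_i}])$ and examine the order in $A$ of each of its points. Each tip $h_{e_i}(t_{e_i})$ lies in the interior of $e_i$, so it has order $2$ in $X$, but $A$ contains only the half of $e_i$ towards $Y$; hence it has order $1$ in $A$ and belongs to $E(A)$. Conversely, every $y\in Y$ has order at least $2$ in $A$: if $y\notin E(Y)$ this is already true in $Y$; if $y\in E(Y)$, then $y\notin E(X)$ (because $Y\subseteq T(X)$), so some edge of $X$ at $y$ is not in $\textup{edge}(Y)$, and that edge occurs among the $e_i$ and contributes a segment attached at $y$. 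Therefore $E(A)=\{h_{e_i}(t_{e_i}):1\le i\le k\}$, giving the equality in (a); the inclusion $U_Y\subseteq U_n(X)$ is then obtained by placing a closed cube around $A$ inside the open neighborhood $U_Y\cong(0,1)^n$ (the closed cube is a neighborhood of $A$ in $C(p,X)$ because $U_Y$ is itself open in $C(p,X)$).

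For (b), let $B\in U_Y$ and $A'\in \textup{Sub}_p(T(X))$ with $A'\subseteq B$. Every edge of $A'$ must belong to $\textup{edge}(T(X))$ and lie entirely inside $B$; but an edge of $X$ meeting $Y$ yet not in $\textup{edge}(Y)$ contributes only the proper subarc $h_e([0,t_e])$ with $t_e<1$, hence is not contained in $B$. So the only edges of $T(X)$ fitting inside $B$ are those of $Y$, forcing $A'\subseteq Y$; since $Y$ itself qualifies, the union of such $A'$ is $Y$. Part (c) is immediate: if $B\in U_Y\cap U_{Y'}$, then (b) identifies both $Y$ and $Y'$ as the largest element of $\textup{Sub}_p(T(X))$ inside $B$, so $Y=Y'$. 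For (d), connectedness of $U_Y$ follows from the homeomorphism with $\prod_i \textup{int}(e_i)$, openness in $C(p,X)$ from the Vietoris description $U_Y=\{Y\cup A: A\in \langle \textup{int}(e_1),\dots,\textup{int}(e_k)\rangle\}$ recalled before the statement, and openness in $\mathcal U(X)$ then follows since (a) gives $U_Y\subseteq U_n(X)\subseteq \mathcal U(X)$.

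The step I expect to require most care is the endpoint analysis in (a): one must genuinely use that $Y$ is a subtree of $T(X)$ rather than merely of $X$, so that every end point of $Y$ has at least one extra edge of $X$ attached, which the construction of $U_Y$ then actually extends into. Parts (b) and (c) are then essentially structural consequences, and (d) is mostly a rereading of the computation preceding the statement.
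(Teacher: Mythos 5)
Your overall route coincides with the paper's: the paper deduces the whole proposition from the two facts you invoke, namely $U_Y\subset C(Y,X)$ and the identification of $U_Y$ with $\prod_{i=1}^{k}\textup{int}(e_i)$, and your arguments for (a), (b), (c) correctly supply the details the paper leaves implicit. In particular, your endpoint analysis in (a) (using that an end point of $Y$ lies in $T(X)$, hence is not an end point of $X$, hence has an incident $e_i$) and your observation in (b) that a full edge of $X$ cannot be contained in a truncated segment $h_e([0,t_e])$ with $t_e<1$ are exactly the intended reasons. (In (b) you should also rule out an edge of $T(X)$ contained in $B$ but \emph{disjoint} from $Y$: such an edge would lie in a single segment $h_{e_i}((0,t_i])$, which is impossible since two distinct edges of $X$ meet in at most their end points; this is a one-line addition.)

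The genuine gap is the openness of $U_Y$ in $C(p,X)$, which you use twice --- for $U_Y\subset U_n(X)$ in (a) and for (d) --- but never actually prove. The displayed description $U_Y=\{Y\cup A\in C(X):A\in\langle \textup{int}(e_1),\dots,\textup{int}(e_k)\rangle\}$ is a \emph{parametrization}: it presents $U_Y$ as the image of a Vietoris-open family under the map $A\mapsto Y\cup A$, and continuous images of open sets need not be open; likewise, a subset of $C(p,X)$ homeomorphic to an open cell need not be open in $C(p,X)$ (for instance $\{\,Y\cup h_{e_1}([0,t])\cup\bigcup_{i\geq 2}h_{e_i}([0,1/2]):t\in(0,1)\}$ is homeomorphic to $(0,1)$ and is nowhere open). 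To close the gap, exhibit $U_Y$ as a genuine Vietoris-basic open set intersected with $C(p,X)$: put $N:=Y\cup\bigcup_{i=1}^{k}h_{e_i}([0,1))$, which is open in $X$ because every edge of $X$ meeting $Y$ is either an edge of $Y$ or one of the $e_i$; then $U_Y=\langle N,\textup{int}(e_1),\dots,\textup{int}(e_k)\rangle\cap C(p,X)$. Only the inclusion $\supseteq$ needs an argument: if $C\in C(p,X)$ satisfies $C\subset N$ and $C\cap\textup{int}(e_i)\neq\emptyset$ for all $i$, then for each end point $y$ of $Y$ choose an edge $e_{i(y)}$ incident at $y$ (it exists, by the very fact you isolated in (a)); the arc in the tree from $p$ to a point of $C\cap\textup{int}(e_{i(y)})$ lies in $C$ and contains $\overline{py}$, so $Y=\bigcup_{y\in E(Y)}\overline{py}\subset C$; finally each $C\cap e_i$ is a subarc (hereditary unicoherence of trees) containing $h_{e_i}(0)$, meeting $\textup{int}(e_i)$ and avoiding $h_{e_i}(1)$, whence $C=Y\cup\bigcup_i h_{e_i}([0,t_i])\in U_Y$. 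With this in place, your closed-cube argument for $U_Y\subset U_n(X)$ and part (d) both go through.
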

Next we establish some consequences of the previous proposition.

According to (c) and (d), given $Y\in \textup{Sub}_{p}(T(X))$, $U_{Y}$ is a component of $\mathcal U(X)$, therefore we get a function
$\phi_X: \textup{Sub}_{p}(T(X))\to \pi_{0}(\mathcal U(X))$, given by $\phi_{X}(Y)=U_Y$.

\begin{corollary}
The function $\phi_{X}$ is bijective.
\end{corollary}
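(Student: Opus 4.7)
The plan is to derive both directions of the bijection from Proposition \ref{components}. For injectivity, suppose $\phi_X(Y_1)=\phi_X(Y_2)$, i.e., $U_{Y_1}=U_{Y_2}$. The set $U_{Y_1}$ is nonempty: since $Y_1\subseteq T(X)\subsetneq X$, at least one edge $e$ of $X$ is not in $Y_1$ while meeting $Y_1$, and any choice of $t_e\in(0,1)$ for such edges yields an element of $U_{Y_1}$. Hence $U_{Y_1}\cap U_{Y_2}\neq\emptyset$, and Proposition \ref{components}(c) forces $Y_1=Y_2$.

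For surjectivity, let $C\in\pi_0(\mathcal U(X))$ and pick $A\in C$. It suffices to produce $Y\in\textup{Sub}_p(T(X))$ with $A\in U_Y$: parts (c) and (d) of Proposition \ref{components} show that the sets $U_Y$ are pairwise disjoint and each connected and open in $\mathcal U(X)$, so once every point of $\mathcal U(X)$ lies in some $U_Y$, this family is forced to be the partition of $\mathcal U(X)$ into components, whence $\phi_X(Y)=C$. Guided by part (b), I would set
\[
Y:=\bigcup\{Z\in\textup{Sub}_p(T(X)):Z\subset A\},
\]
which is connected (all summands contain $p$), a finite union (as $X$ has only finitely many edges), a subtree of $T(X)$ whose edges are edges of $X$, and it contains $p$; so $Y\in\textup{Sub}_p(T(X))$ and is by construction the largest subtree of $T(X)$ contained in $A$. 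To verify $A\in U_Y$, I would analyze $A\setminus Y$: since $Y$ is a subtree of the tree $X$, this difference breaks into finitely many connected pieces, each contained in a single edge of $X$ not in $Y$ but meeting $Y$ in one point. Using the hypothesis $A\in\mathcal U(X)$ together with the maximality of $Y$, each such piece must be a proper initial subarc $h_e([0,t_e])$ with $t_e\in(0,1)$, yielding exactly the representation required by the definition of $U_Y$.

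The main obstacle is this last geometric verification: translating the abstract $I^n$-neighborhood assumption into concrete restrictions on the pieces of $A\setminus Y$. One needs to rule out the \emph{full edge} case (handled by maximality whenever the offending edge lies in $T(X)$, since otherwise $Y$ could be enlarged), and the case where a piece terminates at a vertex of $X$ — at an end point of $X$ this would place $A$ on the boundary of its local $I^n$ model instead of in its interior, while at a ramification point the local model would fail to be Euclidean altogether. Aligning the combinatorics of $\textup{Sub}_p(T(X))$ with these local-structure considerations in $C(p,X)$ is where the real work sits, and it is the crux of turning the picture suggested by part (b) into a rigorous decomposition of $\mathcal U(X)$.
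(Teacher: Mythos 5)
Your proposal is correct and follows essentially the same route as the paper: injectivity from Proposition \ref{components}(c) (with the nonemptiness of $U_Y$ made explicit, which the paper leaves tacit), and surjectivity by taking $Y=\bigcup\{Z\in\textup{Sub}_p(T(X)):Z\subset B\}$ and checking $B\in U_Y$. In fact, the verification you flag as the crux --- ruling out full edges, endpoints of $B$ at vertices of $X$, and missing incident edges via the local $I^n$ structure --- is precisely the step the paper compresses into ``it is clear,'' so your write-up is, if anything, more complete than the published one.
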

\begin{proof}
By (c) from Proposition \ref{components}, we have that $\phi_X$ is one to one. In order to prove that $\phi_X$ is surjective, it is enough to see that for each $B\in \mathcal U(X)$ there exists $Y\in \textup{Sub}_{p}(T(X))$ such that $B\in U_Y$. Let $Y=\bigcup \{A\in \textup{Sub}_{p}(T(X)):A\subset B\}$. It is clear that $Y\in C(p,X)\cap \textup{Sub}_{p}(T(X))$ and $B\in U_Y$.
\end{proof}


Since $U_{Y}$ is a component of $\mathcal U(X)$ there exists $n\in \mathbb{N}$ such that $U_{Y}\subset U_{n}(X)$, thus $\dim (U_{Y})=n$. We have the following monotony property of $\phi_{X}$.

\begin{corollary}
Let $Y,Y'\in \textup{Sub}_{p}(T(X))$. If $Y\subset Y'$ then $\dim (\phi_X (Y))\leq \dim (\phi_X (Y'))$.
\end{corollary}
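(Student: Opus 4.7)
My plan is to reduce the statement to the purely combinatorial inequality $|B(Y)|\le|B(Y')|$, where
\[
B(Z):=\{e\in\textup{edge}(X)\setminus\textup{edge}(Z):e\cap Z\ne\emptyset\}
\]
is the set of ``boundary edges'' of a subtree $Z$. Indeed, by Proposition~\ref{components}(a) together with the remark immediately preceding this corollary, $\dim(\phi_{X}(Z))=|B(Z)|$ for every $Z\in\textup{Sub}_{p}(T(X))$, so the claim follows from the combinatorial estimate.

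The argument proceeds by induction on $N:=|\textup{edge}(Y')\setminus\textup{edge}(Y)|$, with $N=0$ trivial. For the inductive step, the connectedness of $Y'$ ensures the existence of an edge $e_{0}\in\textup{edge}(Y')\setminus\textup{edge}(Y)$ that meets $Y$; since $X$ has no simple closed curves, $e_{0}\cap Y$ consists of a single vertex $v$, and I denote the other endpoint of $e_{0}$ by $w$. Setting $Y'':=Y\cup e_{0}$, one has $Y''\in\textup{Sub}_{p}(T(X))$ with $Y\subset Y''\subset Y'$ and $|\textup{edge}(Y')\setminus\textup{edge}(Y'')|=N-1$, so by the inductive hypothesis it is enough to prove the one-step inequality $|B(Y)|\le|B(Y'')|$.

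To establish it, I would first observe that $B(Y)\setminus\{e_{0}\}\subset B(Y'')$, because every edge of $B(Y)$ different from $e_{0}$ still meets $Y\subset Y''$ and does not belong to $\textup{edge}(Y'')$. Next, every edge of $X$ incident to $w$ other than $e_{0}$ lies in $B(Y'')\setminus B(Y)$: it meets $Y''$ at $w$, and its other endpoint cannot lie in $Y$, for otherwise joining that endpoint to $v$ through $Y$ and then back via $e_{0}$ would produce a simple closed curve in the tree $X$. Since $e_{0}$ is an edge of $T(X)$, the point $w$ is a vertex of $T(X)$, and by the preliminary observations of Section~3 every vertex of $T(X)$ is a ramification point of $X$; hence $\textup{ord}(w,X)\ge 3$. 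Combining these two observations,
\[
|B(Y'')|=|B(Y)|-1+(\textup{ord}(w,X)-1)\ge|B(Y)|+1,
\]
which actually yields the strict inequality $|B(Y)|<|B(Y'')|$ and closes the induction.

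The main subtlety is the verification that every edge of $X$ incident to $w$ different from $e_{0}$ is a genuinely new element of $B(Y'')$, not already counted in $B(Y)$; this is precisely where the acyclicity of $X$ is used in an essential way, together with the fact that endpoints of edges of $T(X)$ are ramification points of the ambient tree $X$.
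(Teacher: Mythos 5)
Your proof is correct. Its first step --- invoking Proposition~\ref{components}(a) together with the remark that $\dim(U_Z)$ equals the number of boundary edges $|B(Z)|$ --- is already the paper's \emph{entire} proof: the paper simply asserts the resulting inequality $|B(Y)|\leq|B(Y')|$ as self-evident and stops. What you do differently is actually prove that combinatorial inequality, by induction on $|\textup{edge}(Y')\setminus\textup{edge}(Y)|$, adding one edge at a time. Your one-step identity $|B(Y\cup e_{0})|=|B(Y)|-1+(\textup{ord}(w,X)-1)$, combined with $\textup{ord}(w,X)\geq 3$ because endpoints of edges of $T(X)$ are ramification points of $X$, is exactly the content the paper develops later and independently: the first is Proposition~\ref{interseccion celdas}(3), and the second is the observation at the heart of Lemma~\ref{incidencias}. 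So your route buys two things the paper's proof does not give: a genuine justification of the asserted cardinality inequality, and a stronger conclusion, namely strict monotonicity $\dim(\phi_X(Y))<\dim(\phi_X(Y'))$ whenever $Y\subsetneq Y'$, which is consistent with Corollary~\ref{minimax} and with Proposition~\ref{interseccion celdas}(3). One caveat, which you share with the paper rather than introduce: if $Y=\{p\}$ and $p$ is not a vertex of $X$, then ``$e_{0}\cap Y$ consists of a single vertex'' fails literally (and the paper's own construction of $U_{\{p\}}$ also breaks, since it needs boundary edges to meet $Y$ in an endpoint); both arguments implicitly treat $p$ as a vertex of $X$, which is harmless because the machinery is ultimately applied with $p\in R(X)$, so this is a shared convention and not a gap in your proof.
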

\begin{proof}
Using (a) from Proposition \ref{components}, we get 
\begin{align*}
\dim U_{Y} &= |\{ e\in \textup{edge}(X)\setminus \textup{edge}(Y):e\cap Y\neq \emptyset\}|\\
&\leq  |\{ e\in \textup{edge}(X)\setminus \textup{edge}(Y'):e\cap Y'\neq \emptyset\}|= \dim U_{Y'}.
\end{align*}

\end{proof}

\begin{corollary}\label{minimax}
The following holds
\begin{align*}
\textup{ord}(p,X) & = \min \{ \dim (U_Y): Y\in \textup{Sub}_p(T(X))\} = \dim U_{\{p\}}, \\
|E(X)| & = \max \{ \dim (U_Y): Y\in \textup{Sub}_p(T(X))\} = \dim U_{T(X)}.
\end{align*}

Moreover $U_{\textup{ord}(p,X)}(X)=U_{\{p\}}$ and $U_{|E(X)|}(X)=U_{T(X)}$.
\end{corollary}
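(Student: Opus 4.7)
The plan is to combine the decomposition $\mathcal U(X)=\bigsqcup_{Y\in \textup{Sub}_p(T(X))}U_Y$ furnished by Proposition \ref{components} and the bijection $\phi_X$, the monotonicity corollary just proved, and a direct count of $\dim U_Y$ at the extreme subtrees $\{p\}$ and $T(X)$, which are respectively the smallest and largest elements of $\textup{Sub}_p(T(X))$.

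First I would use Proposition \ref{components}(a) to replace $\dim U_Y$ by the combinatorial count $|M(Y)|$, where
\[
 M(Y):=\{e\in \textup{edge}(X)\setminus \textup{edge}(Y): e\cap Y\neq \emptyset\}.
\]
For $Y=\{p\}$, the set $M(\{p\})$ is exactly the collection of edges of $X$ incident to $p$, so $|M(\{p\})|=\textup{ord}(p,X)$. For $Y=T(X)$, the edges of $X$ not in $T(X)$ are precisely the pendant edges $\overline{xr_x}$ with $x\in E(X)$, and each meets $T(X)$ in the single point $r_x$, so $|M(T(X))|=|E(X)|$. Since every $Y\in \textup{Sub}_p(T(X))$ satisfies $\{p\}\subseteq Y\subseteq T(X)$, the monotonicity corollary sandwiches $\textup{ord}(p,X)\le \dim U_Y\le |E(X)|$, which establishes the two equalities stated.

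For the ``moreover'' part I need to upgrade monotonicity to \emph{strict} monotonicity, and this is the main technical step. The clean calculation is: if $e=\overline{uv}$ is an edge of $X$ with $u\in Y$ and $v\notin Y$, then adjoining $e$ to $Y$ removes $e$ from $M$ and adds exactly the $\textup{ord}(v,X)-1$ remaining edges of $X$ at $v$, giving
\[
 |M(Y\cup e)|-|M(Y)|=\textup{ord}(v,X)-2.
\]
Because $v\in T(X)$ and no vertex of $T(X)$ can be an end point of $X$ (its unique incident edge would meet $E(X)$ and hence be excluded from $T(X)$), one has $v\in R(X)$ and $\textup{ord}(v,X)\ge 3$. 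Thus every leaf-edge adjunction strictly increases $|M|$ by at least $1$, and iterating along a chain of leaf adjunctions shows that every proper enlargement of a subtree of $T(X)$ strictly increases $\dim U$. Consequently $\{p\}$ and $T(X)$ are the unique minimizer and the unique maximizer of $Y\mapsto \dim U_Y$ on $\textup{Sub}_p(T(X))$.

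To conclude the ``moreover'' identifications, the disjoint decomposition $\mathcal U(X)=\bigsqcup_Y U_Y$ together with the inclusion $U_Y\subseteq U_{\dim U_Y}(X)$ and invariance of dimension gives $U_n(X)=\bigsqcup_{\dim U_Y=n}U_Y$ for every $n$. Applying this for $n=\textup{ord}(p,X)$ and for $n=|E(X)|$, and combining with the just-established uniqueness of the extremal $Y$, yields $U_{\textup{ord}(p,X)}(X)=U_{\{p\}}$ and $U_{|E(X)|}(X)=U_{T(X)}$. The main obstacle is precisely the strict-increment computation, where one must exploit the very definition of $T(X)$ to force $v\in R(X)$ and thereby obtain $\textup{ord}(v,X)-2\ge 1$.
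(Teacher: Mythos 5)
Your proof is correct, and for the two displayed chains of equalities it coincides with the paper's own (very terse) argument: the equalities $\dim U_{\{p\}}=\textup{ord}(p,X)$ and $\dim U_{T(X)}=|E(X)|$ come from the edge count of Proposition \ref{components}(a) (for $T(X)$, the missing edges are precisely the pendant edges $\overline{xr_x}$, $x\in E(X)$, each meeting $T(X)$ only at $r_x$), and the min/max identifications come from the monotonicity corollary applied to $\{p\}\subseteq Y\subseteq T(X)$. Where you genuinely depart from the paper is the \emph{moreover} clause: the paper's proof does not address it at all, treating it as immediate, while you correctly recognize that it is not --- since $U_n(X)$ is the union of \emph{all} components $U_Y$ with $\dim U_Y=n$, one must show that $\{p\}$ and $T(X)$ are the unique extremizers, which requires upgrading monotonicity to strict monotonicity. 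Your increment formula $|M(Y\cup e)|-|M(Y)|=\textup{ord}(v,X)-2\geq 1$, forced by the observation that every vertex of $T(X)$ is a ramification point of $X$, is exactly the needed step (one should also note, as is clear from unique arcwise connectivity, that the edges at $v$ other than $e$ cannot already meet $Y$, so they are genuinely new elements of $M$); interestingly, this is the same computation the paper only establishes later, in Proposition \ref{interseccion celdas}(3) and in the proof of Lemma \ref{incidencias}, so your argument in effect anticipates those results and places them where they are first needed. One caveat, inherited from the paper rather than introduced by you: the identity $|M(\{p\})|=\textup{ord}(p,X)$, and indeed the construction of $U_{\{p\}}$ itself, presupposes that $p$ is a vertex of $T(X)$ (hence $p\in R(X)$), since for $p$ interior to an edge no parametrization $h_e$ with $h_e(0)=p$ exists; this matches how the corollary is actually used later (Theorem \ref{ramificacion}), so no repair of your write-up is required.
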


\begin{proof}
The right-hand side equalities are trivial from previous corollary. The left-hand side equalities are also an easy consequence of (a) from Proposition \ref{components}.
\end{proof}

\section{Unique hyperspace $C(p,X)$ of trees}

We will denote by $\mathcal{T}$ the class of trees, by $\mathcal{I}$ the class of arcs, by $\mathcal{N}$ the class of simple $n$-ods and by $\widehat{\mathcal{T}}=\mathcal{T}\setminus (\mathcal{I}\cup\mathcal{N})$.

The following result is a slight modification of \cite[Corollary 3.4, p. 45]{CESV(2018)}, and its proof is essentially the same.

\begin{proposition}\label{ordenes}
Let $X,Y\in\mathcal{T}$ and $p\in X$, $q\in Y$. If $C(p,X)$ is homeomorphic to $C(q,Y)$, then $\textup{ord}(p,X)=\textup{ord}(q,Y)$.
\end{proposition}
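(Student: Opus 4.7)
The plan is to express $\textup{ord}(p,X)$ as an intrinsic topological invariant of the space $C(p,X)$, so that any homeomorphism with $C(q,Y)$ automatically preserves it.

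First I would observe that the set $U_n(X)$ is defined purely in terms of the internal topology of $C(p,X)$, referring only to the existence of a neighborhood homeomorphic to $I^n$. By invariance of dimension, $I^n$ and $I^m$ are non-homeomorphic for $n\neq m$, so the sets $U_n(X)$ are pairwise disjoint and each point of $\mathcal{U}(X)$ has a well-defined local Euclidean dimension. Consequently, any homeomorphism $h\colon C(p,X)\to C(q,Y)$ sends $U_n(X)$ onto $U_n(Y)$ for every $n$, restricts to a homeomorphism $\mathcal{U}(X)\to\mathcal{U}(Y)$, and induces a dimension-preserving bijection $\pi_0(\mathcal{U}(X))\to\pi_0(\mathcal{U}(Y))$.

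Second, by parts (c) and (d) of Proposition \ref{components} the components of $\mathcal{U}(X)$ are exactly the sets $U_Y$ with $Y\in\textup{Sub}_p(T(X))$, and part (a) together with Corollary \ref{minimax} identifies $\textup{ord}(p,X)$ with
\[
\min\{\dim C : C\in \pi_{0}(\mathcal{U}(X))\},
\]
a quantity determined entirely by the homeomorphism type of $C(p,X)$. Combining this with the previous step gives $\textup{ord}(p,X)=\textup{ord}(q,Y)$.

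The only bookkeeping point is that $T(X)$ was only defined for $X\in\widehat{\mathcal{T}}$, so the cases $X\in\mathcal{I}\cup\mathcal{N}$ (and likewise for $Y$) must be handled separately. But in these cases $C(p,X)$ is immediately describable, as a single cell (when $X$ is an arc and $p$ an end point, or $X$ a simple $n$-od and $p$ the vertex) or as a union of two cells glued along an arc (when $p$ is an ordinary point of an arc or of a simple $n$-od); in each subcase $\mathcal{U}(X)$ and the minimum dimension of its components can be read off directly and shown to equal $\textup{ord}(p,X)$. The main obstacle is essentially this case-by-case verification, since the central insight---that the local cell decomposition of $C(p,X)$ recovers $\textup{ord}(p,X)$ as its minimum local Euclidean dimension---transfers with no difficulty across a homeomorphism.
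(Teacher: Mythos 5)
Your core idea is sound, and it is in fact the idea behind the result the paper leans on: the paper gives no self-contained proof of Proposition \ref{ordenes} at all, it simply cites \cite[Corollary 3.4]{CESV(2018)}, so your attempt to derive it internally from the cell structure of Section 3 is a genuinely different, more self-contained route. Your first step is correct: by invariance of dimension the sets $U_n(X)$ are pairwise disjoint and are defined purely topologically, so any homeomorphism $C(p,X)\to C(q,Y)$ restricts to a homeomorphism $\mathcal{U}(X)\to \mathcal{U}(Y)$ and induces a dimension-preserving bijection $\pi_0(\mathcal{U}(X))\to \pi_0(\mathcal{U}(Y))$.

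The gap is in your second step, where you invoke Corollary \ref{minimax} for every $X\in\widehat{\mathcal{T}}$ and every $p\in X$. The Section 3 machinery presupposes $p\in T(X)$: by definition $\textup{Sub}_p(T(X))$ is the family of subtrees of $T(X)$ containing $p$, so it is \emph{empty} whenever $p\notin T(X)$; in that case $U_{\{p\}}$ is not even defined, Corollary \ref{minimax} is vacuous, and the identification of $\pi_0(\mathcal{U}(X))$ with $\{U_Y : Y\in \textup{Sub}_p(T(X))\}$ fails outright. This situation is not covered by your special cases $X\in\mathcal{I}\cup\mathcal{N}$: take for $X$ the tree with edges $\overline{ax_1}$, $\overline{ax_2}$, $\overline{ab}$, $\overline{by_1}$, $\overline{by_2}$, so that $X\in\widehat{\mathcal{T}}$ and $T(X)=\overline{ab}$, and take $p=x_1\in E(X)$; more generally the problem occurs for every $p\in E(X)$ and every $p$ lying on an edge that meets $E(X)$. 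For such $(X,p)$ the equality $\textup{ord}(p,X)=\min\{\dim C : C\in\pi_0(\mathcal{U}(X))\}$ is still true, but nothing you cite proves it, and these are precisely the points for which the proposition is later needed (the paper applies Proposition \ref{ordenes} to $p\in O(X)\cup E(X)$ in the proofs of Theorems \ref{nodos} and \ref{ordinariosfinales}). To close the gap you must either (i) prove the equality directly for arbitrary $p$: show that $\{p\}$ has the neighborhood $C(p,N)\cong I^{\textup{ord}(p,X)}$, where $N$ is a small closed star at $p$, and that each of the $\textup{ord}(p,X)$ branches at $p$ contributes an independent deformation parameter inside every neighborhood of every $A\in C(p,X)$, so that $U_m(X)=\emptyset$ for $m<\textup{ord}(p,X)$ (this is essentially the content of the cited corollary of \cite{CESV(2018)}); or (ii) first attach two arcs at $p$ and two at $q$ and transport the homeomorphism to the enlarged hyperspaces via the construction in the proof of Lemma \ref{pegandoarcos} (whose proof never uses the hypothesis $p\in R(X)$), so that both points become ramification points and hence lie in $T(\cdot)$ of the enlarged trees, where your argument does apply. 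A smaller omission of the same kind: your list of subcases for $\mathcal{N}$ skips $p\in E(X)$ for a simple $n$-od, where $C(p,X)$ is an arc and an $(n-1)$-cell glued at a single point, not two cells glued along an arc; your method still works there, but the case should be stated.
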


\begin{definition}
Let $n\in \mathbb{N}$. A continuum $Y$ is an \textit{$n$-od} if there exists $K\in C(Y)$ such that $Y\setminus K$ has at least $n$ components. Further, we will say that $K$ is a \textit{core} of the $n$-od. If $n=3$, $Y$ is called a \textit{triod}.
\end{definition}

\begin{theorem}\label{arcos}
If $X\in \mathcal{I}$ and $p\in X$, then $(X,p)$ has unique hyperspace in $\mathcal{T}$.
\end{theorem}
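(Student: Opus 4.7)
The plan is to split on $\textup{ord}(p,X)$, which for an arc $X$ is either $1$ (if $p\in E(X)$) or $2$ (if $p\in X\setminus E(X)$). Writing $X=[0,1]$ with $p=0$ in the first case gives $C(p,X)=\{[0,t]:t\in[0,1]\}$, an arc; with $p\in(0,1)$ in the second case gives $C(p,X)\cong[0,p]\times[p,1]$, a $2$-cell. If $Y$ is a tree, $q\in Y$, and $C(p,X)\cong C(q,Y)$, then Proposition~\ref{ordenes} forces $\textup{ord}(q,Y)=\textup{ord}(p,X)$, so $q$ is respectively an endpoint of $Y$ or an ordinary point of $Y$.

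The main step is to show $Y\in\mathcal{I}$ by ruling out $Y\in\widehat{\mathcal{T}}$ and $Y$ being a simple $n$-od with $n\geq 3$. For $Y\in\widehat{\mathcal{T}}$, a graph-theoretic count (ramification points have degree at least $3$, combined with $\sum_{v}\deg(v)=2|\textup{edge}(Y)|$ and $|\textup{edge}(Y)|=|V(Y)|-1$) forces $|E(Y)|\geq 4$; Corollary~\ref{minimax} then supplies an open subset $U_{T(Y)}\subset C(q,Y)$ of dimension $|E(Y)|\geq 4$, contradicting $\dim C(q,Y)\leq 2$. For $Y$ a simple $n$-od with $n\geq 3$ and vertex $v$, I would explicitly parametrize the subcontinua of $Y$ containing $v$ by their independent extensions $y_i\in\overline{vq_i}$ into each of the branches; this embeds an $(n-1)$-cell inside $C(q,Y)$ when $q\in E(Y)$ and an $n$-cell when $q$ is ordinary on a branch, forcing $\dim C(q,Y)\geq n-1\geq 2$ (contradicting that $C(p,X)$ is an arc) or $\dim C(q,Y)\geq n\geq 3$ (contradicting that $C(p,X)$ is a $2$-cell), respectively.

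Once $Y$ is reduced to an arc and the role of $q$ in $Y$ matches that of $p$ in $X$, a routine piecewise-linear homeomorphism between the two arcs can be chosen sending $p$ to $q$, finishing the proof. I do not anticipate a genuine obstacle; the step requiring most care is the explicit description of $C(q,Y)$ for simple $n$-ods, where one must verify that the claimed $(n-1)$- or $n$-cell really sits inside $C(q,Y)$. This is immediate because the $n$ branches at $v$ offer independent directions in which a subcontinuum can be extended once it has reached the ramification vertex, and Proposition~\ref{components} is exactly the tool that turns such independent extensions into a product of intervals, hence a cell of the right dimension.
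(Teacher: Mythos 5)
Your strategy --- identify $C(p,X)$ as an arc or a $2$-cell according to $\textup{ord}(p,X)\in\{1,2\}$, transfer the order of the base point via Proposition~\ref{ordenes}, and then eliminate every tree $Y$ that is not an arc by a dimension count inside $C(q,Y)$ --- is genuinely different from the paper's proof, which instead quotes \cite[Lemma 3.15]{Pellicer(2003)} (if $C(q,Y)$ is an arc or a $2$-cell then $q$ lies in no core of a triod of $Y$) to conclude directly that $Y$ must be an arc. Your treatment of the simple $n$-od case is correct and self-contained: parametrizing the subcontinua through the vertex $v$ by their tips in the $n$ branches really does produce an $(n-1)$-cell (when $q\in E(Y)$) or an $n$-cell (when $q$ is ordinary) inside $C(q,Y)$, and no machinery from Section~3 is needed for this (nor could it be used: $T(Y)$, and hence Proposition~\ref{components} in the form you invoke it, is only defined for trees in $\widehat{\mathcal{T}}$, so your closing appeal to that proposition is a misattribution, though harmless).

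The genuine gap is in the case $Y\in\widehat{\mathcal{T}}$. Corollary~\ref{minimax} concerns the sets $U_G$ with $G\in\textup{Sub}_q(T(Y))$, and that whole apparatus presupposes $q\in T(Y)$; otherwise $\textup{Sub}_q(T(Y))=\emptyset$ and $U_{T(Y)}=\phi_Y(T(Y))$ is not even defined. But in your situation Proposition~\ref{ordenes} forces $\textup{ord}(q,Y)\in\{1,2\}$, so $q$ is an endpoint or an ordinary point: if $q\in E(Y)$ then $q\notin T(Y)$, and the same happens when $q$ is an ordinary point interior to an edge incident on an endpoint. Worse than a formal scope problem: every element of $U_{T(Y)}$ has the form $T(Y)\cup\bigcup_e h_e([0,t_e])$ with all $t_e\in(0,1)$, hence contains no endpoint of $Y$; so for $q\in E(Y)$ the set $U_{T(Y)}$ is actually \emph{disjoint} from $C(q,Y)$, and the claimed open subset $U_{T(Y)}\subset C(q,Y)$ of dimension $|E(Y)|$ does not exist. (This is exactly why the paper deploys the Section~3 machinery only for $p\in R(X)$ in Theorem~\ref{ramificacion}, and handles $p\in O(X)\cup E(X)$ separately in Theorem~\ref{ordinariosfinales} via the arc-attachment Lemma~\ref{pegandoarcos}.) The gap is reparable by the very technique you use for $n$-ods: let $A_0$ be the union of $T(Y)$ with the arc from $q$ to $T(Y)$; the subcontinua containing $A_0$ and meeting each remaining end-edge in a proper nondegenerate initial segment form an open subset of $C(q,Y)$ homeomorphic to a product of at least $|E(Y)|-1\geq 3$ open intervals (your count $|E(Y)|\geq 4$ for $Y\in\widehat{\mathcal{T}}$ is correct), which contradicts $\dim C(q,Y)\leq 2$. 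With that substitution your argument goes through.
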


\begin{proof}
If $X$ is an arc with end points $a$ and $b$ and $p\in X$, then $C(p,X)$ is an arc if $p\in\{a,b\}$ and $C(p,X)$ is a two cell if $p\in X\setminus \{a,b\}$ (see \cite[Theorem 3.17, p. 265]{Pellicer(2003)}). By \cite[Lemma 3.15, p. 264]{Pellicer(2003)}, if $Y$ is a continuum and $q\in Y$ is such that $C(q,Y)$ is an arc or a two cell, then $q$ is not in a core of a triod in $Y$, thus in the case that $Y\in \mathcal{T}$, $Y$ must be an arc. By Proposition \ref{ordenes}, is easy to see that $C(p,X)$ homeomorphic to $C(q,Y)$ implies that there exists an homeomorphism $h:X\to Y$ sending $p$ to $q$.
\end{proof}

\begin{theorem}\label{nodos}
If $X\in \mathcal{N}$ and $p\in X$, then $(X,p)$ has unique hyperspace in $\mathcal{T}$.
\end{theorem}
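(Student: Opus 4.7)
The plan is to split into three cases according to the common value $k := \textup{ord}(p,X) = \textup{ord}(q,Y)$ provided by Proposition \ref{ordenes}; since $X = T_n$, the only possibilities are $k = n$ (when $p$ is the vertex), $k = 2$ (when $p$ is ordinary), and $k = 1$ (when $p$ is an end point). If $n \leq 2$ then $X$ is an arc and Theorem \ref{arcos} applies, so I assume $n \geq 3$. A direct parametrization of subcontinua by their intersection with each edge of $T_n$ then gives: $C(p, X) \cong I^n$ when $p$ is the vertex; $C(p, X)$ is an arc joined at one end to a corner of an $(n-1)$-cell when $p$ is an end point; and $C(p, X)$ is a $2$-cell glued along a $1$-dimensional face to an $n$-cell when $p$ is ordinary. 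From these models one reads off the topological invariant $\mathcal U(C(p, X))$: respectively a single component of dimension $n$; two components of dimensions $1$ and $n-1$; and two components of dimensions $2$ and $n$.

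The heart of the proof is to rule out $Y \in \widehat{\mathcal T}$. For $k = n$ this is direct from Corollary \ref{minimax}: $\textup{ord}(q, Y) = n \geq 3$ forces $q \in R(Y) \subseteq T(Y)$, so $\dim U_{\{q\}} = n$ and $\dim U_{T(Y)} = |E(Y)|$, and the standard identity $|E(Y)| = 2 + \sum_{r \in R(Y)}(\textup{ord}(r, Y) - 2)$ combined with $|R(Y)| \geq 2$ for a tree in $\widehat{\mathcal T}$ yields $|E(Y)| \geq n + 1$; hence $\mathcal U(C(q, Y))$ has at least two components of distinct dimensions, contradicting the fact that $\mathcal U(C(p, T_n)) \cong I^n$ is a single component. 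For $k \in \{1, 2\}$ the point $q$ may fail to lie in $T(Y)$ and Corollary \ref{minimax} is not directly available, so I would instead decompose $C(q, Y)$ along the edge $\overline{q r_q}$ from $q$ to the nearest ramification point $r_q \in R(Y) \subseteq T(Y)$, writing $C(q, Y)$ as an arc (when $k = 1$) or a $2$-cell (when $k = 2$) glued to $C(r_q, Y')$ at the element $\{r_q\}$, where $Y'$ denotes the subtree of $Y$ obtained by deleting the open arc from $q$ to $r_q$. A count of connected components yields $|\pi_0(\mathcal U(C(q, Y)))| = 1 + |\pi_0(\mathcal U(C(r_q, Y')))|$, and matching with the two components of $\mathcal U(C(p, T_n))$ forces $|\pi_0(\mathcal U(C(r_q, Y')))| = 1$. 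Hence $C(r_q, Y')$ is a cell, which forces $Y'$ to be an arc or a simple $m$-od with $r_q$ its vertex, and then $Y = Y' \cup \overline{q r_q}$ is itself forced into $\mathcal N$, contradicting $Y \in \widehat{\mathcal T}$.

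Once $Y \in \mathcal I \cup \mathcal N$ is established, $Y$ cannot be an arc since then $C(q, Y)$ would be an arc or a $2$-cell, not matching any of the three models of $C(p, T_n)$ for $n \geq 3$. So $Y = T_m$ for some $m \geq 3$, and matching $k$ against $\textup{ord}(q, T_m) \in \{1, 2, m\}$ together with matching the dimensions of the $\mathcal U$-components on both sides forces $m = n$ and places $q$ in the same type of position as $p$. Because the symmetry group of $T_n$ acts transitively on each of the three orbits of points (vertex, end points, ordinary points), the required homeomorphism $X \to Y$ sending $p$ to $q$ exists.

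The main obstacle is the sub-case $k \in \{1, 2\}$ with $Y \in \widehat{\mathcal T}$: here $q$ need not lie in $T(Y)$ so the Section 3 machinery is not directly available at $q$ itself, and routing through the auxiliary subtree $Y'$ requires one to verify carefully that the \emph{junction} subcontinuum $\overline{q r_q}$ is excluded from $\mathcal U(C(q, Y))$. This reduces to noting that its neighborhood in $C(q, Y)$ is modeled on an arc wedged to a corner of a cell of dimension $\textup{ord}(r_q, Y) - 1 \geq 2$, hence is not itself a cell; only then does the component count split cleanly as $1 + |\pi_0(\mathcal U(C(r_q, Y')))|$ and the inductive-style reduction to the smaller tree $Y'$ go through.
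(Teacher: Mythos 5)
Your vertex case ($k=n$) is correct, and it is in fact a more rigorous justification than the paper's own one-line treatment of that case: Corollary \ref{minimax} applied to $(Y,q)$ with $q\in R(Y)\subset T(Y)$, together with the edge count $|E(Y)|=2+\sum_{r\in R(Y)}(\textup{ord}(r,Y)-2)$ and $|R(Y)|\geq 2$ for $Y\in\widehat{\mathcal{T}}$, does exclude $Y\in\widehat{\mathcal{T}}$ cleanly. The gaps are in the cases $k\in\{1,2\}$, and they are genuine. First, your structural claim for $k=2$ is wrong: an ordinary point $q$ is a cut point of $Y$, so $C(q,Y)\cong C(q,Y_1)\times C(q,Y_2)$ where $Y_1,Y_2$ are the closures of the two components of $Y\setminus\{q\}$; in the pendant-edge case this is a $2$-cell glued \emph{along a $1$-dimensional face} to $I\times C(r_q,Y')$, not a $2$-cell wedged to $C(r_q,Y')$ at a point. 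Test it on a simple triod with $q$ ordinary: your wedge description produces a $2$-dimensional space, whereas $C(q,Y)$ is a $2$-cell glued along a face to a $3$-cell. (Your proposed wedge point $\{r_q\}$ is not even an element of $C(q,Y)$.) Consequently the count $|\pi_0(\mathcal{U}(C(q,Y)))|=1+|\pi_0(\mathcal{U}(C(r_q,Y')))|$ fails as stated for $k=2$, and when $q$ lies on an edge both of whose endpoints are ramification points there is no single ``nearest ramification point'' and the decomposition is different again (components essentially multiply across the product).

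Second, and this affects $k=1$ as well, the chain ``$|\pi_0(\mathcal{U}(C(r_q,Y')))|=1$, hence $C(r_q,Y')$ is a cell, hence $Y'$ is an arc or a simple od with vertex $r_q$'' is precisely the classification you would need to prove, and nothing in Section 3 gives it: $r_q$ may have order $2$ in $Y'$ and lie outside $T(Y')$, so Corollary \ref{minimax} is just as unavailable for $(Y',r_q)$ as it was for $(Y,q)$ --- you are facing the original difficulty one tree smaller. That is repairable, but only by formulating and proving an actual inductive statement (say, on $|\textup{edge}(Z)|$: for every tree $Z$ and $z\in Z$, $\mathcal{U}(C(z,Z))$ is connected if and only if $Z$ is an arc, or $Z$ is a simple od with vertex $z$), which your proposal never states; as written the implication is a non sequitur. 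There is also a quieter unproved step: your reductions need facts of the form ``if $B$ has no cell neighborhood in $W$, then $(s,B)$ has no cell neighborhood in $I\times W$,'' which is not automatic. For contrast, the paper disposes of $p\in E(X)\cup O(X)$ in a few lines without any of this machinery: by \cite[Proposition 3.7, p.~45]{CESV(2018)} the ramification vertices $x,y$ adjacent to $p,q$ satisfy $\textup{ord}(x,X)=\textup{ord}(y,Y)$, and a dimension count ($\dim C(q,Y)=\textup{ord}(y,Y)$ would be exceeded if $Y$ had a further ramification point) forces $Y$ to be a simple od. Alternatively, within your framework the cleanest repair is the arc-attachment trick of Lemma \ref{pegandoarcos}: attach arcs at $p$ and $q$ to raise their orders until both are ramification points, and then invoke the machinery (or Theorem \ref{ramificacion}) there.
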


\begin{proof}
Let $p\in X$ and suppose that $Y\in \mathcal{T}$ and $q\in Y$ is such that $C(p,X)$ is homeomorphic to $C(q,Y)$. We consider the following two cases.\\
Case 1. If $p\in R(X)$. By Proposition \ref{ordenes}, $q\in R(Y)$ and $\textrm{ord}(p,X)=\textrm{ord}(q,Y)$. If $n=\textrm{ord}(p,X)$, then $C(p,X)$ and $C(q,Y)$ are $n$-cells, thus $Y$ is a simple $n$-od with vertex $q$.\\  
Case 2. If $q\in E(X)\cup O(X)$. By Proposition \ref{ordenes}, $q\in E(Y)\cup O(X)$. Consider $l_{p}$ and $l_{q}$ the edges in $X$ and $Y$, containing $p$ and $q$, respectively. Let $x\in (V(X)\setminus \{p\})\cap l_{p}$ and $y\in (V(Y)\setminus \{q\})\cap l_{q}$. By \cite[Proposition 3.7, p. 45]{CESV(2018)}, we have that $\textrm{ord}(x,X)=\textrm{ord}(y\in Y)$. If $Y$ contains  a point $r\in R(Y)\setminus \{y\}$, then $C(q,Y)$ contains an $(\textrm{ord}(q,Y)+\textrm{ord}(r,Y))$-cell, but this in impossible because $\dim(C(q,Y))=\dim(C(p,X))=\textrm{ord}(x,X)=\textrm{ord}(y,Y)$. Therefore, $Y$ is an simple $\textrm{ord}(x,X)$-od.\\
In both cases the existence of an homeomorphism $h:X\to Y$ sending $p$ to $q$ is trivial.
\end{proof}

%
%
%


\textit{Notation.} Let $X\in \widehat{\mathcal{T}}$. Suppose that $G\in \textup{Sub}_p(T(X))$ consists of the edges $e_1, \dots, e_k$ and let $f_1, \ldots, f_m$ be the edges of $X\setminus G $ incident on $G$. We will use the notation 
	$$U_G = [G;\, f_1, \ldots, f_m]= \left[ e_1, \ldots, e_k \, ; \, f_1, \ldots, f_m \right],$$
to describe the component $\phi(G)$ of $\mathcal{U}(X)$. 

We will need the following lemma:

\begin{lemma} \label{incidencias}
Let $X\in \widehat{\mathcal{T}}$. Let $G=e_1\cup \ldots \cup e_k \subset T(X)$ be a collection of edges and let $f_1, \ldots, f_m$ be the edges of $X\setminus G $ incident on $G$. Then $m\geq 2$.
\end{lemma}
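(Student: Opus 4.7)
The plan is to exploit the definition of $T(X)$ itself: since $T(X)$ is built by deleting every edge of $X$ that touches an end point, any vertex of $X$ sitting inside $T(X)$ is forced to be a ramification point. The lemma will then follow from a one-line count of the edges of $X$ clustered at a single end point of the subtree $G$.

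First I would note that $G=e_1\cup\cdots\cup e_k$ contains at least one edge and hence, viewed as a tree in its own right, has at least two end points. Pick any such end point $v$ of $G$. Because $G$ is a union of whole edges of $X$, $v$ must be an endpoint of some $e_i$ and is therefore a vertex of $X$. The defining condition $e\cap E(X)=\emptyset$ for every $e\in\mathrm{edge}(T(X))$ gives $T(X)\cap E(X)=\emptyset$, so $v\in V(X)\setminus E(X)=R(X)$. In particular $\mathrm{ord}(v,X)\geq 3$.

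Next I would observe that precisely one edge of $X$ at $v$ can lie in $G$: if two edges of $X$ at $v$ were contained in $G$, then $v$ would have order at least two in $G$ and could not be an end point of $G$. Combining this observation with $\mathrm{ord}(v,X)\geq 3$ produces at least two edges of $X$ at $v$ that do not belong to $G$. Each of these is an edge of $X\setminus G$ incident on $G$ via the point $v\in G$, and therefore $m\geq 2$.

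The one step where I would be careful is the identification of ``end point of $G$ (as a subtree)'' with ``vertex of $X$ sitting in $T(X)$.'' One has to remember that the intrinsic graph structure of $T(X)$ need not agree with the restriction to $T(X)$ of the graph structure of $X$, since a ramification point of $X$ may well become an ordinary (order-two) point of $T(X)$. What makes the argument go through cleanly is that $G$ is prescribed as a union of full edges of $X$, so its own graph-theoretic end points are endpoints of edges of $X$ lying in $T(X)$, and those must belong to $R(X)$. Once this is isolated, the hypothesis $X\in\widehat{\mathcal{T}}$ enters only implicitly, through the fact that $T(X)$ is non-degenerate so that such a $G$ with $k\geq 1$ exists at all.
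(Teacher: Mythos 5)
Your proof is correct, and it takes a genuinely different (and leaner) route than the paper's. Both arguments hinge on the same key fact, which you isolate explicitly: every endpoint of an edge of $T(X)$ is a vertex of $X$ avoiding $E(X)$, hence lies in $R(X)$ and has order at least $3$ in $X$. The difference is architectural. The paper proves the lemma by induction on $k$: its base case $k=1$ is essentially your single-vertex count, and its inductive step re-orders the edges so that $e_{k+1}$ is a leaf edge of $G$ and tracks the change in the number of incident edges, $m \mapsto m-1+\mathrm{ord}(p_2,X)-1 \geq m+1$. You observe that no induction is needed: any leaf $v$ of the subtree $G$ (which exists because $k\geq 1$) meets exactly one edge of $G$, lies in $R(X)$ by the key fact, and therefore already carries at least $\mathrm{ord}(v,X)-1\geq 2$ edges of $X\setminus G$, each incident on $G$ at $v$; this alone gives $m\geq 2$. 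What the paper's induction buys, at the cost of bookkeeping, is implicitly the stronger estimate $m\geq k+1$ (the count grows by at least one per added edge), but that surplus is never used. Your closing caveat is also exactly the right one: the intrinsic vertex structure of $T(X)$ need not match that of $X$ (a ramification point of $X$ can be an end point or ordinary point of $T(X)$), and what saves the argument is that $G$ is a union of full edges of $X$, so its leaves are endpoints of edges of $X$ contained in $T(X)$ --- the paper glosses over this by simply asserting that ``all the vertices of $T(X)$ are ramification points.'' One shared implicit assumption worth noting: both your argument and the paper's treat $G$ as connected (a subtree), which is the setting in which the lemma is applied; if $G$ were an arbitrary disjoint union of edges, your leaf argument would still apply verbatim to any single component.
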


\begin{proof}
By induction on $k$. For $k=1$, let $e=\overline{p_1p_2}$, we can assume $p_2\neq p$. Then ord$(p_2,X)\geq 3$, since all the vertices $T(X)$ are ramification points; in other words, there are at least two incident edges on $e=\overline{p_1p_2}$. 

Next, assume the lemma holds for $k$ and consider $G=e_1\cup \ldots \cup e_k\cup e_{k+1}$, since $T(X)$ is a tree we can choose, without loss of generality, $e_{k+1}$ such that it intersects at most one of $e_1, \ldots, e_k$. Let $e_{k+1}=f_1, \ldots, f_m$ be the edges incident on $G$, by hypothesis  $m\geq 2$. Furthermore let the point $p_2$ be the extreme of $e_{k+1}$ which is not in $G$, once again we have ord$(p_2,X)\geq 3$, that is, the edges incident on $G'=e_1\cup \ldots \cup e_k \cup e_{k+1}$ are $m-1 + \textrm{ord}(p_2,X)-1\geq m+1 \geq 2$. This finishes the induction and the proof of the lemma.

\end{proof}

\begin{lemma} \label{no-interseccion celdas}
Let $X\in \widehat{\mathcal{T}}$. Let $G, G'\in \textup{Sub}_p(T(X))$, suppose there is an $e \in \textup{edge}(G)$ such that $e \cap G' = \emptyset$ (i.e. $e$ is not an edge of $G'$ nor it is incident on $G'$). Then $\overline{U}_G \cap \overline{U}_{G'} = \emptyset $.
\end{lemma}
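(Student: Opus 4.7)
The plan is to use the edge $e$ as a certificate that distinguishes limits of $U_G$ from limits of $U_{G'}$: I will show that every continuum in $\overline{U}_G$ contains $e$ as a subset, while no continuum in $\overline{U}_{G'}$ does, which forces the two closures to be disjoint.

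For the first half of this dichotomy, note that by the construction of $U_G$ every $A \in U_G$ contains $G$, so in particular $e \subseteq G \subseteq A$. Since $G$ is closed in $X$, the condition ``$G \subseteq A$'' is preserved under Hausdorff limits: if $x \in G$ and $A_n \to A$ with $G \subseteq A_n$ for every $n$, then $d(x,A)=\lim d(x,A_n)=0$, hence $x \in A$. Thus every element of $\overline{U}_G$ contains $G$, and in particular contains the edge $e$.

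For the second half, let $f_1, \ldots, f_m$ be the edges of $X \setminus G'$ incident on $G'$. By the explicit description of $U_{G'}$ given before Proposition \ref{components}, every element of $U_{G'}$ is contained in the closed set $K := G' \cup f_1 \cup \cdots \cup f_m$, and ``being contained in $K$'' is likewise a closed condition under Hausdorff convergence; so $\overline{U}_{G'} \subseteq \{A : A \subseteq K\}$. It thus suffices to verify that $e \not\subseteq K$. The hypothesis $e \cap G' = \emptyset$ ensures that $e$ is not an edge of $G'$ and that $e$ is distinct from each $f_j$, since each $f_j$ meets $G'$ while $e$ does not. In a finite graph two distinct edges share at most one vertex, so $|e \cap f_j| \leq 1$ for every $j$; combined with $e \cap G' = \emptyset$, this shows that $e \cap K$ is finite. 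Since $e$ is an arc, $e \not\subseteq K$, hence $e \not\subseteq A$ for any $A \in \overline{U}_{G'}$.

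The argument has no serious obstacle: everything reduces to the elementary facts that ``containing a fixed closed set'' and ``being contained in a fixed closed set'' are both closed conditions in the Hausdorff topology, together with the observation that in a tree distinct edges meet in at most a single vertex. In effect, the lemma is a direct consequence of the explicit product-description of $U_Y$ developed in the previous section.
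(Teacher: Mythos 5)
Your proof is correct and follows essentially the same route as the paper's: the edge $e$ is used as a certificate, since every element of $\overline{U}_G$ contains $G\supseteq e$, while every element of $\overline{U}_{G'}$ lies inside $G'$ together with its finitely many incident edges, a set that meets $e$ in only finitely many points. Your write-up is in fact somewhat more careful than the paper's, which simply asserts the form of the elements of the two closures, whereas you justify the passage to closures via the standard facts that ``contains a fixed closed set'' and ``is contained in a fixed closed set'' are closed conditions for the Hausdorff metric.
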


\begin{proof}
Let $f_1, \ldots , f_m$ be the edges of $X\setminus G$ incident on $G$, similarly, let $f'_1, \ldots , f'_{m'}$  be the edges of $X\setminus G'$ incident on $G'$. Note that an element $K'\in \overline{U}_{G'}$ is of the form $K'=G' \cup \bigcup^{m^{\prime}}_{i=1}\{A_{i}:A_{i}\in C(f^{\prime}_{i})\}$, similarly an element $K\in \overline{U}_{G}$ is of the form $K=G \cup \bigcup^{m}_{j=1}\{A_{j}:A_{j}\in C(f_{j})\}$, therefore $e \subset K$, however there is no way $e\subset K'$, since $e$ is not an edge of $G'$ nor it is incident on $G'$.  Hence $\overline{U}_G $ and $ \overline{U}_{G'}$ cannot have elements in common. 
\end{proof}

The case when all the edges of $G$ are edges of $G'$ or are incident on $G'$ and vice versa is covered in the following:

\begin{proposition}
Let $X\in \widehat{\mathcal{T}}$. Consider $G$, $G' \in \textup{Sub}_p(T(X))$ and let
	\begin{itemize}
	\item $e_1, \ldots ,e_k$ be the edges of $G \cap G'$.
	\item $e_{k+1}, \ldots, e_{k+l}$ be the edges of $G$ which are not in $G'$.
	\item $e'_{k+1}, \ldots, e'_{k+l'}$ be the edges of $G'$ which are not in $G$.
	\item $d_1, \ldots, d_n$ be the edges of $X\setminus(G\cup G')$ incident on $G\cap G'$.
	\item $f_1, \ldots, f_m$ be the edges of $X\setminus(G\cup G')$ incident on $G$ but not on $G'$.	
	\item $f'_1, \ldots, f'_{m'}$ be the edges of $X\setminus(G\cup G')$ incident on $G'$ but not on $G$.
	\end{itemize}
 Suppose that $e'_{k+1}, \ldots, e'_{k+l'}$ are incident on $G$ and $e_{k+1}, \ldots, e_{k+l}$ are incident on $G'$. Then
	\begin{enumerate}
		\item  $\overline{U}_G \cap \overline{U}_{G'} \neq \emptyset $.
		\item $\dim(\overline{U}_G \cap \overline{U}_{G'}) = n$.
	\end{enumerate}
\end{proposition}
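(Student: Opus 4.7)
The plan is to establish both conclusions simultaneously by identifying $\overline{U}_G \cap \overline{U}_{G'}$ with an $n$-cell parametrized by the edges $d_1, \ldots, d_n$ incident on $G \cap G'$ from outside. Conclusion (2) will follow from such an identification being a homeomorphism onto $[0,1]^n$, while conclusion (1) is a free byproduct: any explicit point of the parametrized family, for instance $G \cup G'$ itself, witnesses non-emptiness.

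First I would verify directly that $G \cup G' \in \overline{U}_G \cap \overline{U}_{G'}$. A generic element of $\overline{U}_G$ has the form $G \cup \bigcup_{e} h_e([0, t_e])$, where $e$ ranges over the edges incident on $G$, namely $e'_{k+1}, \ldots, e'_{k+l'}, d_1, \ldots, d_n, f_1, \ldots, f_m$, and each $t_e \in [0,1]$. Setting $t_{e'_{k+i}} = 1$ for all $i$ and $t_{d_j} = t_{f_l} = 0$ otherwise yields $G \cup G'$, since $\{e'_{k+1}, \ldots, e'_{k+l'}\}$ is exactly the set of edges of $G'$ that are not in $G$. The symmetric choice places $G \cup G'$ in $\overline{U}_{G'}$, which proves conclusion (1).

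For conclusion (2), I claim that
\[
\overline{U}_G \cap \overline{U}_{G'} = \Big\{ G \cup G' \cup \bigcup_{j=1}^{n} h_{d_j}([0, t_j]) : (t_1, \ldots, t_n) \in [0,1]^n \Big\}.
\]
The inclusion $\supseteq$ follows from the same representational trick used above, applied independently in each factor. For the inclusion $\subseteq$, take $K \in \overline{U}_G \cap \overline{U}_{G'}$; then $K \supseteq G \cup G'$. From the hypothesis $K \in \overline{U}_{G'}$, the set $K$ is contained in $G'$ together with the edges incident on $G'$. Since $f_l$ is neither in $G'$ nor incident on $G'$, and since distinct edges of a tree meet only at vertices, the interior of $f_l$ is disjoint from this union. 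Hence the coefficient $t_{f_l}$ in the $\overline{U}_G$-representation of $K$ must be $0$. Symmetrically, $t_{f'_s} = 0$ for each $s$ from the $\overline{U}_{G'}$-representation. Therefore $K = G \cup G' \cup \bigcup_{j=1}^n h_{d_j}([0, t_j])$, as claimed. The parametrized family is a continuous injective image of $[0,1]^n$ in $C(p,X)$, hence an $n$-cell, and $\dim(\overline{U}_G \cap \overline{U}_{G'}) = n$.

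The main obstacle is the inclusion $\subseteq$, specifically the argument that $K$ cannot meet the interior of any $f_l$ or $f'_s$. This relies simultaneously on the tree structure of $X$ (distinct edges meet only at vertices) and on the standing hypothesis that $f_l$ is not incident on $G'$ (and vice versa); without the latter, the intersection could gain additional parameters. Once this step is in place, the parametrization by $[0,1]^n$ and the resulting dimension count are routine.
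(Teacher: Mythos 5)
Your proof is correct and takes essentially the same route as the paper's: both describe $\overline{U}_G$ and $\overline{U}_{G'}$ via the closed whisker parametrization and identify $\overline{U}_G \cap \overline{U}_{G'}$ as the $n$-cell of sets $G \cup G' \cup \bigcup_{j=1}^{n} A_j$ with $A_j \in C(d_j)$, which yields both non-emptiness and the dimension count. The only difference is one of rigor: where the paper says ``it can be seen from this description,'' you supply the explicit two-sided inclusion argument (in particular the observation that an edge not incident on $G'$ can contribute no interior points to an element of $\overline{U}_{G'}$).
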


	\begin{proof}
Note that, $G$ consists of the edges $e_1, \ldots, e_k , \ldots e_{k+l}$ and the edges incident on $G$ are $d_1, \ldots, d_n, e'_{k+1} \ldots e'_{k+l'}, f'_1, \ldots, f'_{m'}$. Similarly for $G'$, thus,
	\begin{align*}
		U_G & =  \left[ e_1, \ldots, e_k , e_{k+1}, \ldots, e_{k+l} \,  ; \,   d_1, \ldots, d_n, e'_{k+1}, \ldots, e'_{k+l'},  f_1, \ldots, f_{m}  \right],	\\
		U_{G'} & =  \left[ e'_1, \ldots, e'_k , e'_{k+1}, \ldots, e'_{k+l'}  \, ; \,   d_1, \ldots, d_n, e_{k+1}, \ldots, e_{k+l}, f'_1, \ldots, f'_{m'}  \right].
	 	\end{align*}
The following pictures should make clear the above notation.
	 	
	 	\[
 \begin{tikzpicture}[thick, scale=0.8]
\coordinate (a) at (0,0);
\coordinate (e) at (-1,-1);
\coordinate (b) at (0:2cm);
\coordinate (c) at ($(b)+(1,-1)$);
\coordinate (d) at ($(b)+(0,2)$);
\coordinate (f) at (180: 2cm);
\coordinate (n1) at (-1,-2); \coordinate (n2) at (0,1);
\foreach \x in {a,b,c,d,e,f,n1,n2}{
  \draw node at (\x){\tiny $\bullet$};
}
\begin{scope}[dotted]
\draw (a)--(b) (a)--(e) (b) -- (c) (b) --(d) (e) -- (f); 
\foreach \ang in {0,45,90,135}{
  \draw[shift=(d)] (0,0) -- (\ang : 1cm); 
  \draw[shift=(c)] (0,0) -- (-\ang : 1cm);
  }
\foreach \c in{a,e,f}{
    \draw[shift=(\c)] (0,0) -- (90 :1cm);
}
\foreach \c in{e,f}{
    \draw[shift=(\c)] (0,0) -- (180 :1cm);
}
\foreach \c in{a,b,e}{
    \draw[shift=(\c)] (0,0) -- (-90 :1cm) (0,0) -- (-45 :1cm);
    }
\draw[shift=(b)] (0,0) -- (30:1cm) (0,0) -- (60:1cm);
\foreach \c in{180,60,30,-30}{
  \draw[yshift=1cm] (0,0) -- (\c:1cm);
}
\foreach \c in{180,225,300,350}{
  \draw[xshift=-1cm, yshift=-2cm] (0,0) -- (\c: 1cm);
}
\end{scope}
\draw[line width=2pt] (f) -- (e) -- (n1) (e)--(a)-- (b);
\node at (-2,2){$G\subset T(X)$};
\end{tikzpicture}
\hspace{2.5cm}
 \begin{tikzpicture}[thick, scale=0.8]
\coordinate (a) at (0,0);
\coordinate (e) at (-1,-1);
\coordinate (b) at (0:2cm);
\coordinate (c) at ($(b)+(1,-1)$);
\coordinate (d) at ($(b)+(0,2)$);
\coordinate (f) at (180: 2cm);
\coordinate (n1) at (-1,-2); \coordinate (n2) at (0,1);
\foreach \x in {a,b,c,d,e,f,n1,n2}{
  \draw node at (\x){\tiny $\bullet$};
}
\begin{scope}[dotted]
\draw (a)--(b) (a)--(e) (b) -- (c) (b) --(d) (e) -- (f); 
\foreach \ang in {0,45,90,135}{
  \draw[shift=(d)] (0,0) -- (\ang : 1cm); 
  \draw[shift=(c)] (0,0) -- (-\ang : 1cm);
  }
\foreach \c in{a,e,f}{
    \draw[shift=(\c)] (0,0) -- (90 :1cm);
}
\foreach \c in{e,f}{
    \draw[shift=(\c)] (0,0) -- (180 :1cm);
}
\foreach \c in{a,b,e}{
    \draw[shift=(\c)] (0,0) -- (-90 :1cm) (0,0) -- (-45 :1cm);
    }
\draw[shift=(b)] (0,0) -- (30:1cm) (0,0) -- (60:1cm);
\foreach \c in{180,60,30,-30}{
  \draw[yshift=1cm] (0,0) -- (\c:1cm);
}
\foreach \c in{180,225,300,350}{
  \draw[xshift=-1cm, yshift=-2cm] (0,0) -- (\c: 1cm);
}
\end{scope}
\draw[line width=2pt] (e) -- (a) -- (n2) (a)--(b)-- (d);
\node at (-2,2){$G'\subset T(X)$};
\end{tikzpicture}
\]
In the case of the above subtrees, the edges are as follows:
\[
  \begin{tikzpicture}[thick, scale=1.6]
\coordinate (a) at (0,0);
\coordinate (e) at (-1,-1);
\coordinate (b) at (0:2cm);
\coordinate (c) at ($(b)+(1,-1)$);
\coordinate (d) at ($(b)+(0,2)$);
\coordinate (f) at (180: 2cm);
\coordinate (n1) at (-1,-2); \coordinate (n2) at (0,1);
\foreach \x in {a,b,c,d,e,f,n1,n2}{
  \draw node at (\x){\tiny $\bullet$};
}
\begin{scope}[very thin]
\draw (a)--(b) (a)--(e) (b) -- (c) (b) --(d) (e) -- (f); 
\foreach \ang in {0,45,90,135}{
  \draw[shift=(d)] (0,0) -- (\ang : 1cm); 
  }
\foreach \c in{a,e,f}{
    \draw[shift=(\c)] (0,0) -- (90 :1cm);
}
\foreach \c in{e,f}{
    \draw[shift=(\c)] (0,0) -- (180 :1cm);
}
\foreach \c in{a,b,e}{
    \draw[shift=(\c)] (0,0) -- (-90 :1cm) (0,0) -- (-45 :1cm);
    }
\draw[shift=(b)] (0,0) -- (30:1cm) (0,0) -- (60:1cm);
\foreach \c in{180,60,30,-30}{
  \draw[yshift=1cm] (0,0) -- (\c:1cm);
}
\foreach \c in{180,225,300,350}{
  \draw[xshift=-1cm, yshift=-2cm] (0,0) -- (\c: 1cm);
}
\end{scope}
\foreach \ang in {0,45,90,135}{
  \draw[shift=(c), dotted] (0,0) -- (-\ang : 1cm);
  }
\draw[line width=1.5pt] (e) -- (a) -- (n2) (a)--(b)-- (d);
\draw[line width=1.5pt] (f) -- (e) -- (n1);
\draw[line width=3pt, blue] (e)--(a)-- (b);
\node at (-0.5,-0.3){$\mathbf e_1$}; \node at (1,0.15){$\mathbf e_2$}; 
\node at (-1.6,-0.25){$e_3$}; \node at (-1.2,-1.5){$e_4$}; 
\node at (1.8,1){$e'_4$}; 
\foreach \x/\y\i in{-1.8/0.5/1,-2.5/-0.2/2,-1.5/-1.8/3,-1.6/-2.3/4,-0.5/-2.5/5, -0.4/-1.9/6}{
    \node at (\x,\y) {$f_{\i}$};
}
\foreach \x/\y\i in{-1.15/-0.35/1, -1.7/-0.85/2, -0.4/-1.4/3, -0.2/-0.8/4, 0.4/-0.6/5,1.8/-0.8/6,
2.6/-0.4/7, 2.6/0.2/8,2.6/0.7/9}{
    \node at (\x,\y) {$d_{\i}$};
}
\foreach \x/\y/\i in{-0.7/0.8/1,0.2/1.6/2,0.7/1.2/3, 0.8/0.7/4,
1.5/2.2/5,2.1/2.6/6,2.7/2.5/7, 2.5/1.8/8}{
    \node at (\x,\y) {$f'_{\i}$};
}
\end{tikzpicture}
\]

	Therefore, $\dim(U_G)$= $n+l'+m$ and $\dim(U_{G'})$= $n+l+m'$. Moreover, it can be seen from this description that $\overline{U}_G \cap \overline{U}_{G'}$ consists of elements of the form $K = G\cup G' \cup \bigcup^{n}_{k=1}\{A_{k}:A_{k}\in C(d_{k})\}$. Hence $\dim(\overline{U}_G \cap \overline{U}_{G'}) = n $.	
	\end{proof}
	 	 
As an important consequence of the description given above, we have:

\begin{proposition} \label{interseccion celdas}
Let $X\in \widehat{\mathcal{T}}$. Consider $G$, $G' \in \textup{Sub}_p(T(X))$ such that $G'=G \cup e$, for some $e\in \textup{edge}(T(X))$. Then 
	\begin{enumerate}
		\item  $\overline{U}_G \cap \overline{U}_{G'} \neq \emptyset $.
		\item $\dim(\overline{U}_G \cap \overline{U}_{G'}) = \dim(U_G)-1$.
		\item $\dim(U_{G'})= \dim(U_{G}) + \textup{ord}(p',X) - 2$, where $p'$ is the vertex of $e$ which is not in $G$.
	\end{enumerate}
	And vice versa, if $\overline{U}_G \cap \overline{U}_{G'} \neq \emptyset $ and $\dim(\overline{U}_G \cap \overline{U}_{G'}) = \dim(U_G)-1$, then there exists an unique 
	$e\in \textup{edge}(T(X))$ such that $G'=G\cup e$.
\end{proposition}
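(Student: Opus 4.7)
The plan is to handle each direction by a direct dimension count on $U_G$, $U_{G'}$, and $\overline{U}_G \cap \overline{U}_{G'}$, using the explicit product description of $U_G$ together with the preceding Proposition for closures. Throughout, let $p'$ denote the endpoint of $e$ not in $G$, and enumerate the edges of $X$ incident on $G$ as $e = f_1, f_2, \ldots, f_M$, so that $\dim(U_G) = M$.

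For the forward direction, I would first observe that the edges of $X$ incident on $G' = G \cup e$ are exactly $f_2, \ldots, f_M$ (they still meet $G \subset G'$) together with the $\textup{ord}(p', X) - 1$ edges at $p'$ distinct from $e$; these two families are disjoint, for otherwise two distinct edges would both join $p'$ to $G$ and create a cycle in the tree $X$. This gives $\dim(U_{G'}) = (M - 1) + (\textup{ord}(p', X) - 1)$, proving (3). For (1) and (2), any $K \in \overline{U}_G \cap \overline{U}_{G'}$ must contain $G'$, and intersecting the two ambient containers forces $K$ to lie in $G' \cup f_2 \cup \cdots \cup f_M$, so that $K = G' \cup A_2 \cup \cdots \cup A_M$ with each $A_i \in C(f_i)$ meeting $G$. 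This is a product of $M - 1$ arcs, yielding nonemptiness and dimension $M - 1 = \dim(U_G) - 1$.

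For the converse, I plan to combine Lemma \ref{no-interseccion celdas} (applied to $G,G'$ in both orders) with the preceding Proposition. Nonemptiness of $\overline{U}_G \cap \overline{U}_{G'}$ forces every edge of $G$ to be in or incident on $G'$, and symmetrically, so the incidence hypothesis of the preceding Proposition holds. That Proposition then yields $\dim(\overline{U}_G \cap \overline{U}_{G'}) = n$ and $\dim(U_G) = n + l' + m$ in its notation, so the assumed equality collapses to $l' + m = 1$. This leaves two subcases: (A) $l' = 1,\ m = 0$, and (B) $l' = 0,\ m = 1$.

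The main obstacle will be ruling out the unwanted configurations in these subcases, which I would handle by a leaf argument in $G$. The key geometric claim is: any vertex $w$ that is a leaf of $G$ and lies outside $G'$ admits at least two edges of $X \setminus (G \cup G')$ incident on $G$ at $w$ but not on $G'$. This uses the structural properties of $T(X)$ from Section~3: if $w$ is an endpoint of $T(X)$, then at least two endpoint-edges of $X \setminus T(X)$ emanate from $w$ and none of them meets $G' \subset T(X)$; if $w$ is a ramification point of $T(X)$, then at least two further $T(X)$-edges leave $w$, whose other endpoints cannot lie in $G'$, since otherwise those edges together with paths in $G \cup G'$ would yield a cycle in the tree $T(X)$. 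In subcase B, $G = G'$ is incompatible with the dimension hypothesis, so $G' \subsetneq G$; a leaf of $G$ outside $G'$ can then be pruned out of the subforest $G \setminus G'$, and the claim yields $m \geq 2$, contradicting $m = 1$. In subcase A, the same argument forces $l = 0$, so $G \subset G'$ and $G' = G \cup e$ for the unique $e \in \textup{edge}(G') \setminus \textup{edge}(G)$.
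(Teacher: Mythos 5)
Your proposal follows essentially the same route as the paper's proof: the forward direction is the same explicit description of $U_G$, $U_{G'}$ and of the elements of $\overline{U}_G \cap \overline{U}_{G'}$ (your $f_2,\ldots,f_M$ are the paper's $d_1,\ldots,d_n$), and your converse makes the same reduction --- nonemptiness plus Lemma \ref{no-interseccion celdas} puts you in the hypotheses of the preceding proposition, the dimension equation collapses to $l'+m=1$, and a case analysis finishes. Where you diverge is that the paper disposes of the two subcases with a terse appeal to Lemma \ref{incidencias}, while you spell out an explicit leaf argument; that extra detail is welcome, but as written it contains a gap.

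The gap is in the justification of your key geometric claim, which rests on the dichotomy ``$w$ is an endpoint of $T(X)$'' or ``$w$ is a ramification point of $T(X)$.'' This dichotomy is not exhaustive: a leaf $w$ of $G$ is an endpoint of an edge of $T(X)$, hence a ramification point of $X$ (so $\textup{ord}(w,X)\geq 3$), but it can be an \emph{ordinary} point of $T(X)$ --- for instance $w\in R(X)$ with $\textup{ord}(w,X)=3$, two of whose edges lie in $T(X)$ and the third of which joins $w$ to an end point of $X$ (the vertex labelled $y$ in the paper's figure is exactly of this kind). For such $w$ neither of your cases applies, and the assertion ``at least two further $T(X)$-edges leave $w$'' is false, since only one does. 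The claim itself is true, and the repair is a uniform count merging your two mechanisms: all $\textup{ord}(w,X)-1\geq 2$ edges of $X$ at $w$ other than the single leaf edge of $G$ lie outside $G\cup G'$ (none is an edge of $G$ because $w$ is a leaf of $G$, nor of $G'$ because $w\notin G'$), and none of them is incident on $G'$: an edge of $X\setminus T(X)$ at $w$ has its other endpoint in $E(X)$, which is disjoint from $T(X)\supset G'$, while an edge of $T(X)$ at $w$ whose other endpoint lay in $G'$ would close a cycle in the tree $T(X)$, since $G\cup G'$ is connected (both contain $p$) and does not contain that edge. With the claim proved this way, the rest of your argument --- a leaf of $G$ outside $G'$ exists whenever $G\not\subset G'$, forcing $m\geq 2$ and killing both unwanted configurations --- goes through and recovers exactly the paper's conclusion.
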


\begin{proof}
 Using the notation of the previous proposition we have in this case that $G=G\cap G'=\{e_1, \ldots, e_k\}$, $e=e'_{k+1}$ is the only edge of $G'$ incident on $G$, $d_1, \ldots, d_n$ are the edges incident on $G$, there are no $f$'s and $e, f'_1, \ldots, f'_{m'}$ are the edges incident on $p'$. Hence,
\begin{align*}
		U_G & =  \left[ e_1, \ldots, e_k \,  ; \,   d_1, \ldots, d_n, e \right],	\\
		U_{G'} & =  \left[ e_1, \ldots, e_k , e \, ; \,   d_1, \ldots, d_n, f'_1, \ldots, f'_{m'}  \right].
	 	\end{align*}
From the previous it can be seen that $\overline{U}_G \cap \overline{U}_{G'}$ consists of elements of the form $K=G' \cup \bigcup^{n}_{i=1}\{A_{i}:A_{i}\in C(d_{i})\}$. Thus, $\overline{U}_G \cap \overline{U}_{G'} \neq \emptyset $ and $\dim(\overline{U}_G \cap \overline{U}_{G'}) = n$. 

Moreover, 
\begin{align*}
\dim(U_G) &= n+1, \\
\dim(U_{G'}) &= n+m'= \dim(U_G)-1 + \textrm{ord}(p',X)-1.
\end{align*} 
This concludes the first part.

For the converse, first observe that by Lemma \ref{no-interseccion celdas} and the condition $\overline{U}_G \cap \overline{U}_{G'} \neq \emptyset $, all the edges of $G\setminus G'$ are incident on $G'$ and all the edges of $G'\setminus G$ are incident on $G$; thus satisfying the condition of the previous proposition. Then the condition $\dim(\overline{U}_G \cap \overline{U}_{G'}) = \dim(U_G)-1$ translates to $n=n+l'+m-1$, that is, $l'+m=1$. We only have two possibilities $(l', m)=(1,0)$ or $(l', m)=(0,1)$. The first case is precisely when $e=e'_{k+1}$ is the only edge of $G'$ which is not in $G$, and $m=0$ implies that $G\subset G'$. The second case, when $m=1$, is not possible due to Lemma \ref{incidencias}. Therefore the only possibility is $(l', m)=(1,0)$, so $G'=G\cup e$. Since $G'$ is given, then the edge $e$ is unique.
\end{proof}

\begin{definition}
Let $p,p'\in X$ be vertices in $X\in\mathcal{T}$, we will denote by $\overline{pp'}$ the smallest connected subtree containing $p$ and $p'$.
\end{definition}

\begin{proposition} \label{bijection}
Let $X, Y \in \mathcal{T}$, $p \in X$ and $q \in Y$. If $h : C(p, X) \rightarrow C(q,Y)$ is an homeomorphism then for each vertex $p' \in T(X)$ there exists a unique $q'\in T(Y)$ such that $h(U_{\overline{pp'}})=U_{\overline{qq'}}$. Moreover, if $\overline{pp'}$ consists of the edges $e_i=\overline{p_{i-1}p_i}$, $i=1, \ldots, n$, with $p=p_0$ and $\overline{qq'}$ consists of the edges $f_i=\overline{q_{i-1}q_i}$, $i=1, \ldots, m$, with $q=q_0$, then $n=m$ and $h(U_{\overline{pp_i}})=U_{\overline{qq_i}}$ for $i=0, 1, \ldots, n$.
\end{proposition}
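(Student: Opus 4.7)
The plan is to prove both conclusions simultaneously by induction on $n$, the number of edges in $\overline{pp'}$. The main claim is the special case $i=n$ of the ``moreover'' statement, so it suffices to inductively produce the sequence $q=q_0,q_1,\dots$ with $h(U_{\overline{pp_i}})=U_{\overline{qq_i}}$. Throughout, one uses the bijection $\phi_Y\colon \textup{Sub}_q(T(Y))\to \pi_0(\mathcal{U}(Y))$, together with the fact that a homeomorphism carries components of $\mathcal{U}(X)$ to components of $\mathcal{U}(Y)$ while preserving closures, intersections and dimension.

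For the base case $n=0$, one has $p'=p$ and must check $h(U_{\{p\}})=U_{\{q\}}$. Because every edge of $T(X)$ ends at a vertex of order $\geq 3$ in $X$ (since $V(T(X))\subset R(X)$), adding any edge to $\{p\}$ strictly increases the dimension of the associated cell; together with the monotony of $\phi_X$, Corollary~\ref{minimax} yields that $U_{\{p\}}$ is the \emph{unique} component of $\mathcal{U}(X)$ of minimum dimension $\textup{ord}(p,X)$. Proposition~\ref{ordenes} gives $\textup{ord}(p,X)=\textup{ord}(q,Y)$, and since $h$ preserves local dimension, $h(U_{\{p\}})$ must be the unique component of $\mathcal{U}(Y)$ of dimension $\textup{ord}(q,Y)$, namely $U_{\{q\}}$.

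For the inductive step, assume $h(U_{\overline{pp_i}})=U_{\overline{qq_i}}$ for $0\leq i\leq k$ with $q_i\in V(T(Y))$. Since $\overline{pp_{k+1}}=\overline{pp_k}\cup e_{k+1}$, the forward direction of Proposition~\ref{interseccion celdas} gives
\[
\overline{U}_{\overline{pp_k}}\cap \overline{U}_{\overline{pp_{k+1}}}\neq \emptyset \quad \text{and}\quad \dim\bigl(\overline{U}_{\overline{pp_k}}\cap \overline{U}_{\overline{pp_{k+1}}}\bigr)=\dim(U_{\overline{pp_k}})-1.
\]
Writing $h(U_{\overline{pp_{k+1}}})=U_G$ for the unique $G\in \textup{Sub}_q(T(Y))$ and transporting the above via $h$, one obtains $\overline{U}_{\overline{qq_k}}\cap \overline{U}_G\neq \emptyset$ with $\dim(\overline{U}_{\overline{qq_k}}\cap \overline{U}_G)=\dim(U_{\overline{qq_k}})-1$. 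The converse part of Proposition~\ref{interseccion celdas} then produces a unique $f\in \textup{edge}(T(Y))$ with $G=\overline{qq_k}\cup f$; defining $q_{k+1}$ as the endpoint of $f$ outside $\overline{qq_k}$ gives $G=\overline{qq_{k+1}}$, completing the induction. The equality $n=m$ comes from the fact that a single edge is added at every stage, and the uniqueness of $q'=q_n$ follows from the injectivity of $\phi_Y^{-1}$ combined with the uniqueness of $f$ at each step.

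The main obstacle is matching the \emph{asymmetric} hypothesis of the converse in Proposition~\ref{interseccion celdas}: one needs the intersection dimension to equal $\dim(U_{\overline{qq_k}})-1$ and not $\dim(U_G)-1$, so that the resulting inclusion is $\overline{qq_k}\subset G$ rather than the opposite. This is precisely where the strict monotony of $\dim U_{(\cdot)}$ on $\textup{Sub}_q(T(Y))$, itself a consequence of $V(T(Y))\subset R(Y)$, becomes indispensable; it forces $\dim(U_G)>\dim(U_{\overline{qq_k}})$ and pins down the correct orientation of the converse. This is also the reason the argument is confined to $\widehat{\mathcal{T}}$, the arc and simple $n$-od cases being dealt with separately in Theorems~\ref{arcos} and \ref{nodos}.
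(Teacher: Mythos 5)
Your overall strategy coincides with the paper's: induction on the length of the path, base case from Corollary~\ref{minimax} and Proposition~\ref{ordenes}, inductive step via both directions of Proposition~\ref{interseccion celdas}; the base case and the orientation bookkeeping are fine. However, there is a genuine gap in the inductive step. The converse of Proposition~\ref{interseccion celdas} only tells you that $h(U_{\overline{pp_{k+1}}})=U_G$ with $G=\overline{qq_k}\cup f$ for a unique edge $f\in\textup{edge}(T(Y))$ \emph{incident on the subtree} $\overline{qq_k}$; it does not tell you that $f$ is attached at the terminal vertex $q_k$. If $f$ were attached at some $q_j$ with $j\leq k-1$, then $G$ would be a subtree with a branch at $q_j$, not a path emanating from $q$, and your definition ``$q_{k+1}$ is the endpoint of $f$ outside $\overline{qq_k}$'' would give $\overline{qq_{k+1}}=\overline{qq_j}\cup f\subsetneq G$, so that $h(U_{\overline{pp_{k+1}}})\neq U_{\overline{qq_{k+1}}}$; both the induction and the ``moreover'' chain collapse. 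Nothing in your argument excludes this possibility, and it is precisely here that the paper spends the last third of its proof.

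The paper closes this gap with Lemma~\ref{no-interseccion celdas}: since $e_{k+1}$ is disjoint from $\overline{pp_i}$ for $i\leq k-1$, one has $\overline{U}_{\overline{pp_i}}\cap\overline{U}_{\overline{pp_{k+1}}}=\emptyset$, and transporting these disjointness relations through $h$ gives $\overline{U}_{\overline{qq_i}}\cap\overline{U}_G=\emptyset$ for $i\leq k-1$. If $f$ were attached at $q_j$ with $j\leq k-1$, the subtree $G$ itself, viewed as an element of $C(q,Y)$, would lie in $\overline{U}_{\overline{qq_{k-1}}}\cap\overline{U}_G$ (indeed $G$ equals $\overline{qq_{k-1}}$ together with the two full edges $\overline{q_{k-1}q_k}$ and $f$, both incident on $\overline{qq_{k-1}}$), a contradiction; hence $f$ is incident at $q_k$ and $G=\overline{qq_{k+1}}$ is a path, as required. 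Incidentally, what you single out as the ``main obstacle''---the asymmetric orientation of the converse of Proposition~\ref{interseccion celdas}---is not actually a difficulty: since $h$ preserves dimension and $h(U_{\overline{pp_k}})=U_{\overline{qq_k}}$, the transported intersection automatically has dimension $\dim(U_{\overline{qq_k}})-1$, which by itself fixes the orientation $\overline{qq_k}\subset G$ without any appeal to strict monotony. The genuine difficulty is the \emph{location} of the attachment along the path, and that is the step your proof omits.
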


\begin{proof}
By induction on $n$. The case $n=0$ follows straightforward from Corollary \ref{minimax}. The case $n=1$ is covered by Proposition \ref{interseccion celdas}, more precisely, consider an edge $e=\overline{pp'} \subset T(X)$ and let $G=\{p\}$ and $G'=G\cup e$, then $\overline{U}_G \cap \overline{U}_{G'} \neq \emptyset $ and  $\dim(\overline{U}_G \cap \overline{U}_{G'}) = \dim(U_G)-1$. These conditions are preserved under the homeomorphism $h$, that is, $h(\overline{U}_G) \cap h(\overline{U}_{G'}) \neq \emptyset $ and  $\dim(h(\overline{U}_G) \cap h(\overline{U}_{G'})) = \dim(h(U_G))-1$ and we know $h(U_G)=U_{\{q\}}$. Since $\phi_{Y}$ is bijective there exists $F'\in  \textup{Sub}_p(T(Y))$ such that $\phi_{Y}(F')=U_{F'}=h(U_{G'})$. By the second part of Proposition \ref{interseccion celdas} applied to $U_{\{q\}}$ and $U_{F'}$, there exists an unique edge $f'\in \textup{edge}(T(Y))$ such that $F'=\{q\}\cup f'$, in other words, there is a unique vertex $q'\in T(X)$ and an edge $f'=\overline{qq'}\subset T(Y)$ such that $h(U_{\overline{pp'}})=U_{\overline{qq'}}$.

Now suppose the proposition holds for $n$ and consider the subtree $\overline{pp'}$ having edges $e_{i}=\overline{p_{i-1}p_i}$, $i=1,2,\ldots,n+1$. Once again we rely completely on Proposition \ref{interseccion celdas}, let $G=e_1\cup \ldots \cup e_{n}$, $G'=G\cup e_{n+1}$, then $\overline{U}_G \cap \overline{U}_{G'} \neq \emptyset $ and  $\dim(\overline{U}_G \cap \overline{U}_{G'}) = \dim(U_G)-1$. These conditions are preserved under the homeomorphism $h$, that is, $h(\overline{U}_G) \cap h(\overline{U}_{G'}) \neq \emptyset $ and  $\dim(h(\overline{U}_G) \cap h(\overline{U}_{G'})) = \dim(h(U_G))-1$ and we know by hypothesis of induction that $h(U_{\overline{pp_i}})=U_{\overline{qq_{i}}}$, for $i=0,1,\ldots, n$; in particular, $h(U_G)=U_{\overline{qq_{n}}}$. As above, let $F'\in \textup{Sub}_p(T(Y))$ such that $U_{F'}=h(U_{G'})$, by Proposition \ref{interseccion celdas} there exists an edge $f'\in \textup{edge}(T(Y))$ such that $F'=\overline{qq_n}\cup f'$, in other words, there is a unique vertex $q'=q_{n+1}\in T(X)$ and an edge $f'=\overline{q_jq_{n+1}}\subset T(Y)$ (for some $j=0,1, \ldots, n$) such that $h(U_{\overline{pp'}})=U_{\overline{qq'}}$. Now, it is left to prove that $j=n$. This is a consequence of Lemma \ref{no-interseccion celdas}, indeed, for $i=0, 1,2, \ldots, n-1$, let $G_i=\{p_0\}\cup e_1\cup \ldots \cup e_i $ and $G'=\{p_0\}\cup e_1\cup \ldots \cup e_n\cup e_{n+1}$. Note that $e_{n+1}$ is not incident on $G_i$, then $\overline{U}_{G_i} \cap \overline{U}_{G'} = \emptyset$, this condition is preserved under $h$, that is, $\overline{U}_{\overline{qq_i}} \cap h(\overline{U}_{G'}) =  h(\overline{U}_{G_{i}}) \cap h(\overline{U}_{G'}) = \emptyset$ (for $i=0,1,\ldots,n-1$) this means that the edge $f'=\overline{q_jq_{n+1}}$ is not incident on any of $q_0, \ldots, q_{n-1}$ and therefore it must be incident on $q_n$ and then $h(U_{\overline{pp_i}})=U_{\overline{qq_{i}}}$, for each $i=0,1,\ldots,n+1$. 
\end{proof}

Finally we put all the pieces together to prove one of the main theorems of this paper.

\begin{theorem}\label{ramificacion}
Let $X, Y \in \mathcal{T}$, $p \in R(X)$ and $q \in Y$. If $h : C(p, X) \rightarrow C(q,Y)$ is an homeomorphism then there exists an homeomorphism  $\mathfrak{h}:X\rightarrow Y$ with $\mathfrak{h}(p)=q$.
\end{theorem}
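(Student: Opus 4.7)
The plan is to promote the component-level homeomorphism data supplied by Propositions \ref{bijection} and \ref{interseccion celdas} into a full vertex-and-edge isomorphism between the ``inner trees'' $T(X)$ and $T(Y)$, and then extend it across the ``hanging'' edges that join $T(X)$ to $E(X)$. First I would dispose of the degenerate cases: since $p\in R(X)$, $X$ is not an arc; and if $X\in\mathcal{N}$ then $p$ is the vertex of the simple $n$-od and Theorem \ref{nodos} already produces $\mathfrak h$. So I may assume $X\in\widehat{\mathcal{T}}$; applying Theorems \ref{arcos} and \ref{nodos} to $(Y,q)$ via $h^{-1}$ then forces $Y\in\widehat{\mathcal{T}}$, while Proposition \ref{ordenes} gives $\textup{ord}(q,Y)=\textup{ord}(p,X)\geq 3$, so $q\in R(Y)\subset T(Y)$.

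Next I would define $\psi:V(T(X))\to V(T(Y))$ by letting $\psi(p')$ be the unique vertex with $h(U_{\overline{pp'}})=U_{\overline{q\,\psi(p')}}$, as furnished by Proposition \ref{bijection}; applying the same result to $h^{-1}$ shows $\psi$ is a bijection, and evaluating at $p'=p$ (with $\overline{pp}=\{p\}$ and Corollary \ref{minimax}) gives $\psi(p)=q$. If $\overline{p_{i-1}p_i}$ is any edge of $T(X)$ lying on a path $\overline{pp'}$, the ``moreover'' clause of Proposition \ref{bijection} places $\psi(p_{i-1})$ and $\psi(p_i)$ as consecutive vertices on $\overline{q\,\psi(p')}$; since every edge of $T(X)$ appears on such a path, $\psi$ induces a graph isomorphism $T(X)\cong T(Y)$, and in particular $\deg_{T(X)}(v)=\deg_{T(Y)}(\psi(v))$ for every vertex $v$.

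I would then verify that $\psi$ preserves the order of each vertex as measured inside the ambient tree. For $v\in V(T(X))\setminus\{p\}$, letting $u$ be the penultimate vertex on $\overline{pv}$, part (3) of Proposition \ref{interseccion celdas} applied to $G=\overline{pu}$ and $G'=\overline{pv}$ gives
\[
\textup{ord}(v,X)=\dim U_{\overline{pv}}-\dim U_{\overline{pu}}+2.
\]
Since $h$ preserves dimensions of the $U$-cells and sends $U_{\overline{pv}}$, $U_{\overline{pu}}$ to $U_{\overline{q\psi(v)}}$, $U_{\overline{q\psi(u)}}$ respectively, the same formula computed in $Y$ yields $\textup{ord}(v,X)=\textup{ord}(\psi(v),Y)$; the remaining case $v=p$ is Proposition \ref{ordenes}. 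Consequently the number of edges of $X$ incident at $v$ but outside $T(X)$, namely $\textup{ord}(v,X)-\deg_{T(X)}(v)$, equals the corresponding count at $\psi(v)$ in $Y$.

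Finally I would build $\mathfrak h$ piecewise: on $T(X)$ it is defined by sending each edge $\overline{p'p''}$ homeomorphically onto $\overline{\psi(p')\psi(p'')}$, matched consistently across shared vertices; at each $v\in V(T(X))$ the hanging edges at $v$ are paired arbitrarily with those at $\psi(v)$ (possible by the previous paragraph) and extended homeomorphically across each arc. The result is a homeomorphism $\mathfrak h:X\to Y$ with $\mathfrak h(p)=\psi(p)=q$. I expect the hard part to be the third step: ensuring that orders---local topological invariants of $X$---can actually be read off from the dimensional data of the cells $U_G$ that $h$ manipulates. Proposition \ref{interseccion celdas}(3) is exactly the tool that makes this possible, after which the rest of the argument is combinatorial bookkeeping plus routine gluing.
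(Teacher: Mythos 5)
Your proposal is correct and follows essentially the same route as the paper's own proof: define the vertex bijection $V(T(X))\to V(T(Y))$ from Proposition \ref{bijection}, get edge preservation from its ``moreover'' clause, use Proposition \ref{interseccion celdas}(3) together with the invariance of $\dim U_G$ under $h$ to match the numbers of hanging end-edges at corresponding vertices, and then realize the resulting graph isomorphism as a homeomorphism sending $p$ to $q$. Your explicit reduction to $X,Y\in\widehat{\mathcal{T}}$ and your separate treatment of the vertex $v=p$ via Proposition \ref{ordenes} are details the paper leaves implicit (its formula for $l(p_n)$ presupposes a path of length $n\geq 1$), so your write-up is, if anything, slightly more complete.
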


\begin{proof}
We will prove the following stronger statement: there is an isomorphism 
$\mathfrak{h} : X \rightarrow Y$ such that $\mathfrak{h}(p)=q$ (this implies the existence of the desired homeomorphism).
The Proposition \ref{bijection} allows us to define a function $ \mathfrak{h} : V(T(X)) \rightarrow V(T(Y))$ given by $\mathfrak{h}(p') = q'$ if $h(U_{\overline{pp'}})=U_{\overline{qq'}}$. The same construction applied to $h^{-1}$ provides us with its inverse, therefore $\mathfrak{h} : V(T(X)) \rightarrow V(T(Y))$ is a bijection. It is left to prove that it preserves edges. Formally, consider an edge $e=\overline{p'p''}\subset T(X)$ and without lost of generality assume that the path $\overline{pp''}$ consists of edges $e_i=\overline{p_{i-1}p_i}$, $i=1, \ldots, n$, with $p=p_0$, $p'=p_{n-1}$ and $p''=p_n$, then by Proposition \ref{bijection}, we know that the path $\overline{qq''}$ consists of the edges $f_i=\overline{q_{i-1}q_i}$, $i=1, \ldots, n$, with $q_i=\mathfrak{h}(p_i)$ and $q=q_0$, $q'=q_{n-1}$ and $q''=q_{n}$. Hence, there is an edge between $q'=\mathfrak{h}(p')$ and $q''=\mathfrak{h}(p'')$.

Finally we need to prove that $ \mathfrak{h} : V(T(X)) \rightarrow V(T(Y))$ can be extended to all of $V(X)$. Indeed, consider a point $p'\in E(X)$ then there is an edge $e=\overline{p_{n}p'}$ with $p_{n}\in T(X)$. Let $l(p_n)=|\{p' \in E(X):\textrm{there is an edge } e=\overline{p_{n}p'} \subset X\}|$, we will prove $l(p_n) = l(\mathfrak{h}(p_n))$. Consider the path $\overline{pp_n}$ consisting of edges $e_i=\overline{p_{i-1}p_i}$, $i=1, \ldots, n$, with $p=p_0$. We can count $l(p_n)$ as follows:
	\begin{align*}
		l(p_n) &= \textrm{ord}(p_n,X)- \textrm{ord}(p_n, T(X))  \\
		 & = \dim(U_{G'}) - \dim (U_G) + 2 - \textrm{ord}(p_n, T(X)) ,
	\end{align*}
	where in the second line we have used part 3 of Proposition \ref{interseccion celdas} for  $G=\overline{pp_{n-1}}$ and $G'=\overline{pp_n}$. Next, it is already known that $\mathfrak{h} : V(T(X)) \rightarrow V(T(Y))$ is a bijection  preserving edges, which, together with $h$ being a homeomorphism and preserving dimensions implies that 
	\begin{align*}
		l(p_n) 	 & = \dim(U_{G'}) - \dim (U_G) + 2 - \textrm{ord}(p_n, T(X))  \\
		 &= \dim(h(U_{G'})) - \dim (h(U_G)) + 2 - \textrm{ord}(q_n, T(Y))  \\
		 &= \dim(U_{\overline{qq_{n}}}) - \dim (U_{\overline{qq_{n-1}}}) + 2 - \textrm{ord}(q_n, T(Y))  \\
		 &= \textrm{ord}(q_n,Y)- \textrm{ord}(q_n, T(Y))  \\
		 &= l(q_n)  \\
		 &= |\{q' \in E(Y) : \textrm{there is an edge } f=\overline{q_{n}q'} \subset Y \}|,
	\end{align*}
	where $q_{n-1}=\mathfrak{h}(p_{n-1})$ and $q_n=\mathfrak{h}(p_n)$. Thus we have established the existence of a bijection between the sets $\{p' \in E(X):\textrm{there is an edge } e=\overline{p_{n}p'}\} $ and $ \{q' \in E(Y):\textrm{there is an edge } f=\overline{q_{n}q'}\} $, by the construction this bijection preserves incidence relations. Therefore we get a full bijection $\mathfrak{h} : V(X) \rightarrow V(Y)$ preserving the edge relations of the graphs and with $\mathfrak{h}(p)=q$. 
\end{proof}

To complete our discussion now we will extend the previous result to the case when $p\in O(X)\cup E(X)$. In order to do so we need the following lemma.

\begin{lemma}\label{pegandoarcos}
Let $X,Y\in\mathcal{T}$, $p\in R(X)$ and $q\in R(Y)$. If there exists edges $l_{p}$ in $X$ and $l_{q}$ in $Y$, containing $p$ and $q$, respectively, such that $l_{p}\cap E(X)\neq \emptyset$, $l_{q}\cap E(Y)\neq \emptyset$ and $C(p, (X\setminus l_{p})\cup \{p\})$ is homeomorphic to $C(q, (Y\setminus l_{q})\cup \{q\})$, then $C(p,X)$ is homeomorphic to $C(q,Y)$. 
\end{lemma}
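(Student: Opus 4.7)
My plan is to prove the stronger structural statement that
\[
C(p,X)\;\cong\; C(p,X')\times[0,1], \qquad X':=(X\setminus l_p)\cup\{p\},
\]
and symmetrically $C(q,Y)\cong C(q,Y')\times[0,1]$ with $Y':=(Y\setminus l_q)\cup\{q\}$. Combined with the hypothesis $C(p,X')\cong C(q,Y')$, this immediately yields $C(p,X)\cong C(q,Y)$. The geometric intuition is that the extra ``freedom'' inside $C(p,X)$ coming from the pendant edge $l_p$ is exactly the continuous parameter $t\in[0,1]$ measuring how far along $l_p$ a subcontinuum extends.

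To set this up, I would first observe that since $p\in R(X)$ satisfies $\textup{ord}(p,X)\geq 3$, we have $p\notin E(X)$, so the condition $l_p\cap E(X)\neq\emptyset$ forces $l_p=\overline{pp_1}$ with $p_1\in E(X)$. In particular $l_p$ is a pendant edge, $X'$ is a subtree of $X$, and $X'\cap l_p=\{p\}$. Fix a homeomorphism $h_p\colon[0,1]\to l_p$ with $h_p(0)=p$, and define
\[
\Phi\colon C(p,X')\times[0,1]\to C(p,X), \qquad \Phi(B,t)=B\cup h_p([0,t]).
\]
The goal is to show $\Phi$ is a continuous bijection; since both the domain and codomain are compact Hausdorff, this suffices. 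Continuity is standard for the Hausdorff metric. Injectivity: if $\Phi(B_1,t_1)=\Phi(B_2,t_2)$, intersecting with $l_p$ and using $B_i\cap l_p=\{p\}$ forces $h_p([0,t_1])=h_p([0,t_2])$, hence $t_1=t_2$; then intersecting with $X'$ gives $B_1=B_2$. For surjectivity, given $A\in C(p,X)$, the intersection $A\cap l_p$ is a subcontinuum of the arc $l_p$ containing $p$, so $A\cap l_p=h_p([0,t])$ for a unique $t\in[0,1]$, and a direct check gives $A=(A\cap X')\cup h_p([0,t])=\Phi(A\cap X',t)$.

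The single delicate point, and the main obstacle I expect, is verifying that $B:=A\cap X'$ is actually connected, so that $B\in C(p,X')$. This is exactly where the pendant nature of $l_p$ is essential: viewing $A$ as a subtree of $X$, the point $h_p(t)$ has order $1$ in $A$ (since $A\cap h_p((t,1])=\emptyset$), so $h_p([0,t])$ is a pendant subarc of $A$ attached to the rest of $A$ only at $p$. Consequently $A\setminus h_p((0,t])=A\cap X'$ is itself a subtree of $A$, hence connected. Once $\Phi$ is established as a homeomorphism, the identical construction produces a homeomorphism $\Psi\colon C(q,Y')\times[0,1]\to C(q,Y)$, and the composition
\[
C(p,X)\;\xrightarrow{\Phi^{-1}}\;C(p,X')\times[0,1]\;\xrightarrow{\,\cong\,}\;C(q,Y')\times[0,1]\;\xrightarrow{\Psi}\;C(q,Y)
\]
is the desired homeomorphism.
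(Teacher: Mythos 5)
Your proposal is correct and is essentially the paper's own argument in factored form: the paper directly defines $H(A)=h\bigl(A\cap((X\setminus l_p)\cup\{p\})\bigr)\cup g\bigl(f^{-1}(A\cap l_p)\bigr)$ and calls it ``easy to see'' that this is a homeomorphism, and this $H$ is exactly your composite $\Psi\circ(h\times\mathrm{id})\circ\Phi^{-1}$, since transferring the arc portion via $g\circ f^{-1}$ is the same as recording the parameter $t$ with $A\cap l_p=f([0,t])$. Your write-up merely makes the product decomposition $C(p,X)\cong C(p,X')\times[0,1]$ explicit and supplies the verifications (connectedness of $A\cap X'$, compactness plus continuous bijection) that the paper omits.
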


\begin{proof}
Let $h:C(p, (X\setminus l_{p})\cup \{p\})\to C(q, (Y\setminus l_{q})\cup \{q\})$ be a homeomorphism. Take $f:[0,1]\to l_{p}$ and $g:[0,1]\to l_{q}$ two homeomorphisms such that $f(0)=p$ and $g(0)=q$. It is easy to see that the map $H: C(p,X) \to C(q,Y)$ given by
$$H(A)=h(A\cap((X\setminus l_{p})\cup\{p\}))\cup g(f^{-1}(A\cap l_{p})), \textrm{ for each $A\in C(p,X)$}$$
is a homeomorphism.
\end{proof}

\begin{theorem}\label{ordinariosfinales}
Let $X\in \mathcal{T}$ and $p\in O(X)\cup E(X)$. Then $(X,p)$ has unique hyperspace $C(p,X)$ in $\mathcal{T}$.
\end{theorem}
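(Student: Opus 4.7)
The plan is to reduce Theorem \ref{ordinariosfinales} to Theorem \ref{ramificacion} by attaching pendant edges at $p$ and $q$ to promote them to ramification points, and then restrict the resulting homeomorphism. If $X \in \mathcal{I} \cup \mathcal{N}$, Theorem \ref{arcos} or \ref{nodos} applies directly, and the symmetric case $Y \in \mathcal{I} \cup \mathcal{N}$ is handled by exchanging the roles of $X$ and $Y$; so I may assume $X, Y \in \widehat{\mathcal{T}}$. Let $h \colon C(p,X) \to C(q,Y)$ be a homeomorphism; Proposition \ref{ordenes} gives $\textup{ord}(p,X) = \textup{ord}(q,Y) \in \{1,2\}$, whence $q \in O(Y) \cup E(Y)$.

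Set $k = 3 - \textup{ord}(p,X)$, so $k = 1$ if $p \in O(X)$ and $k = 2$ if $p \in E(X)$. Attach $k$ new pendant edges $e_1, \ldots, e_k$ at $p$ to form $\tilde{X}$, and likewise $e'_1, \ldots, e'_k$ at $q$ to form $\tilde{Y}$; then $p \in R(\tilde{X})$ and $q \in R(\tilde{Y})$ with $\textup{ord}(p,\tilde{X}) = \textup{ord}(q,\tilde{Y}) = 3$. The key product-structure observation (in the spirit of Lemma \ref{pegandoarcos}) is that each $A \in C(p, \tilde{X})$ decomposes uniquely as $A = (A \cap X) \cup (A \cap e_1) \cup \cdots \cup (A \cap e_k)$, where $A \cap X \in C(p,X)$ (any arc from $p$ into $X$ within $\tilde{X}$ stays in $X$) and each $A \cap e_i = \overline{p\,t_i}$ for some $t_i \in e_i$. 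This yields a homeomorphism $C(p, \tilde{X}) \cong C(p,X) \times [0,1]^k$ via $A \mapsto (A \cap X, t_1, \ldots, t_k)$, with the analogous formula for $\tilde{Y}$; combining with the hypothesis gives $C(p, \tilde{X}) \cong C(q, \tilde{Y})$. Theorem \ref{ramificacion} then produces a graph isomorphism $\tilde{\mathfrak{h}} \colon \tilde{X} \to \tilde{Y}$ with $\tilde{\mathfrak{h}}(p) = q$.

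Since $X = \tilde{X} \setminus \bigcup_{i=1}^{k}(e_i \setminus \{p\})$ and $Y$ analogously, the restriction $\mathfrak{h} = \tilde{\mathfrak{h}}|_X$ will be the desired homeomorphism provided $\tilde{\mathfrak{h}}$ carries $\{e_1, \ldots, e_k\}$ onto $\{e'_1, \ldots, e'_k\}$. Since $\tilde{\mathfrak{h}}$ preserves vertex orders and the pendant/non-pendant status of edges, each $\tilde{\mathfrak{h}}(e_i)$ is a pendant edge at $q$ in $\tilde{Y}$. When $p \in E(X)$, the only pendant edges at $p$ in $\tilde{X}$ are $e_1, e_2$ (the original edge $l_p$ becomes non-pendant since its other endpoint $r_p \in R(X) \subseteq R(\tilde{X})$), and symmetrically at $q$, so $\{\tilde{\mathfrak{h}}(e_1), \tilde{\mathfrak{h}}(e_2)\} = \{e'_1, e'_2\}$ automatically and the restriction works at once. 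The main obstacle is the subcase $p \in O(X)$ in which the edge of $X$ containing $p$ was already pendant in $X$: then $\tilde{X}$ has a second pendant edge at $p$ besides $e_1$, and $\tilde{\mathfrak{h}}(e_1)$ might land on the ``extra'' pendant at $q$ rather than $e'_1$. I would resolve this by composing $\tilde{\mathfrak{h}}$ with a self-homeomorphism $\sigma$ of $\tilde{Y}$ that fixes $q$ and swaps the two pendant edges at $q$: such $\sigma$ exists because any two pendant edges at a common ramification point are arcs meeting only at that point, so the piecewise swap extends by the identity on the rest of $\tilde{Y}$. After this adjustment $(\sigma \circ \tilde{\mathfrak{h}})(e_1) = e'_1$, and the restriction to $X$ is the desired homeomorphism $\mathfrak{h} \colon X \to Y$ with $\mathfrak{h}(p) = q$.
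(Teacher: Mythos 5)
Your proposal is correct and follows essentially the same route as the paper: attach pendant arcs at $p$ and $q$ to promote them to ramification points, transfer the hyperspace homeomorphism to the enlarged trees (your product decomposition $C(p,\tilde X)\cong C(p,X)\times[0,1]^k$ is exactly the mechanism behind Lemma \ref{pegandoarcos}), and then invoke Theorem \ref{ramificacion}. If anything, your final step --- the pendant-edge bookkeeping and the swap homeomorphism $\sigma$ ensuring the attached edges map to the attached edges, so that the homeomorphism $\tilde X\to\tilde Y$ restricts to a homeomorphism of $X$ onto $Y$ sending $p$ to $q$ --- is carried out more carefully than in the paper, which passes over this point with ``thus $X$ and $Y$ are homeomorphic.''
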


\begin{proof}
By Theorems \ref{arcos} and \ref{nodos}, we can suppose that $X\in \widehat{\mathcal{T}}$. Let $p\in O(X)$ and suppose that $Y\in \mathcal{T}$ and $q\in Y$ is such that $C(p,X)$ is homeomorphic to $C(q,Y)$. By Proposition \ref{ordenes}, $\textrm{ord}(X,p)=\textrm{ord}(Y,q)$. Since $X$ can be embedded into the euclidean space $\mathbb{R}^{3}$, suppose that $X$ is a subspace of $\mathbb{R}^{4}$ such that $X\subset \mathbb{R}^{3}\times \{0\}$ and $p=(0,0,0,0)$. Similarly for $Y$. Consider $X^{\prime}=X\cup ((0,0,0)\times [0,1])$ and $Y^{\prime}=Y\cup ((0,0,0)\times [0,1])$. By Lemma \ref{pegandoarcos}, $C(p,X^{\prime})$ is homeomorphic to $C(q,Y^{\prime})$ and using Theorem \ref{ramificacion}, we obtain that there exists an homeomorphism between $X^{\prime}$ and $Y^{\prime}$ sending $p$ to $q$, thus $X$ and $Y$ are homeomorphic. 

The case when $p\in E(X)$ reduces to the case when $p\in O(X)$ by attaching one arc at $p$ and one arc at $q$ becoming these ordinary points.
\end{proof}

Finally we state the main result of this paper which summarizes Theorems \ref{ramificacion} and \ref{ordinariosfinales}.

\begin{theorem}\label{principal}
Let $X\in \mathcal{T}$ and $p\in X$. Then $(X,p)$ has unique hyperspace $C(p,X)$ in the class $\mathcal T$.
\end{theorem}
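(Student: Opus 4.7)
The plan is to reduce this statement to a case analysis on the type of the distinguished point $p$, and then invoke the two structural theorems already proved. Every point of a tree $X$ is either a ramification point, an ordinary point, or an end point, so the partition $X = R(X)\cup O(X)\cup E(X)$ is exhaustive, and it suffices to handle each piece separately.

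First I would fix a homeomorphism $h : C(p,X)\rightarrow C(q,Y)$, where $Y\in \mathcal T$ and $q\in Y$, with the goal of producing a homeomorphism $\mathfrak h : X\rightarrow Y$ such that $\mathfrak h(p)=q$. By Proposition \ref{ordenes}, any such $h$ forces $\textup{ord}(p,X)=\textup{ord}(q,Y)$, so in particular $p\in R(X)$ if and only if $q\in R(Y)$, and similarly $p$ is of order $1$ or $2$ precisely when $q$ is. Thus the three cases $p\in R(X)$, $p\in O(X)$, and $p\in E(X)$ transfer automatically to the corresponding case for $q$, and it is legitimate to split the argument this way.

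If $p\in R(X)$, then Theorem \ref{ramificacion} applies directly and yields the desired homeomorphism $\mathfrak h$ with $\mathfrak h(p)=q$. If instead $p\in O(X)\cup E(X)$, then Theorem \ref{ordinariosfinales} produces $\mathfrak h$. Combining these two cases covers every possibility for $p\in X$, which is exactly the content of the theorem.

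I do not anticipate a real obstacle here, since the heavy lifting has been done in the preceding sections: the cell-structure analysis of $\mathcal U(X)$, the bijective correspondence $\phi_X$ between $\textup{Sub}_p(T(X))$ and $\pi_0(\mathcal U(X))$, and the edge-by-edge reconstruction argument in Proposition \ref{bijection} together power Theorem \ref{ramificacion}, while Lemma \ref{pegandoarcos} is what lets Theorem \ref{ordinariosfinales} reduce the ordinary/end-point situation back to the ramification case. The only thing to be careful about is citing the correct theorem in each case and noting that the three cases genuinely exhaust $X$; beyond that, the proof is a short two-line synthesis of Theorems \ref{ramificacion} and \ref{ordinariosfinales}.
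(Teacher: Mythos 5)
Your proof is correct and is exactly the paper's approach: the paper states Theorem \ref{principal} as a summary of Theorems \ref{ramificacion} and \ref{ordinariosfinales}, which together cover the exhaustive cases $p\in R(X)$ and $p\in O(X)\cup E(X)$. Your extra appeal to Proposition \ref{ordenes} to transfer the case to $q$ is harmless but unnecessary, since neither theorem requires any hypothesis on $q$.
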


The next is an application of the previous theorem.

Given a continuum $X$, we consider the hyperspace of $C(C(X))$, $$K(X)=\{C(x,X):x\in X\}.$$
In \cite{CESV(2018)}, it defines in $K(X)$ the following equivalence relation, $C(p,X)\sim C(q,X)$ if and only if $C(p,X)$ is homeomorphic to $C(q,X)$. Given a positive integer $n$ and a continuum $X$, it is say that \textit{$K(X)$ has size $n$} if the quotient $K(X)/\sim$ has cardinality $n$. In \cite{CESV(2018)}, the authors shown that in the class of graphs $X$, the size of $K(X)$ can be different of the homogeneity degree of $X$. As a concequence of Theorem \ref{principal}, in the following result, we obtain a partial solution of \cite[Problem 5.4, p. 49]{CESV(2018)}.

\begin{corollary}
Let $X$ be a tree. Then the size of $K(X)$ is equal to the homogeneity degree of $X$.
\end{corollary}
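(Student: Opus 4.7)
The plan is to show that the equivalence classes in $K(X)/\sim$ are in natural bijection with the orbits of the group of self-homeomorphisms of $X$ acting on $X$; since the latter cardinality is, by definition, the homogeneity degree of $X$, the corollary follows.

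First I would define an assignment sending each point $p \in X$ to the equivalence class $[C(p,X)] \in K(X)/\sim$, and check that it factors through the orbit equivalence relation on $X$. Indeed, if $f: X \to X$ is a homeomorphism with $f(p) = q$, then the induced map $A \mapsto f(A)$ is a self-homeomorphism of $2^X$ that restricts to a homeomorphism $C(p,X) \to C(q,X)$; consequently $C(p,X) \sim C(q,X)$. This yields a well-defined map $\Psi$ from the set of orbits of the self-homeomorphism group to $K(X)/\sim$, which is surjective by the very definition of $K(X) = \{C(x,X) : x \in X\}$.

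The main step is the injectivity of $\Psi$, and this is exactly where Theorem \ref{principal} enters. Suppose $p, q \in X$ satisfy $C(p,X) \sim C(q,X)$; applying Theorem \ref{principal} to the pair $(X,p)$ and the candidate $(Y,q) = (X,q)$ in $\mathcal{T}$ produces a homeomorphism $\mathfrak{h}: X \to X$ with $\mathfrak{h}(p) = q$, so $p$ and $q$ lie in the same orbit. Hence $\Psi$ is a bijection, and the size of $K(X)$ equals the homogeneity degree of $X$. No substantive obstacle is anticipated: once Theorem \ref{principal} is invoked, the argument amounts to identifying the homeomorphism-type relation on $K(X)$ with the orbit relation on $X$, and both directions of that identification are essentially tautological given the theorem.
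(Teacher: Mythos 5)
Your proof is correct and follows exactly the route the paper intends: the corollary is stated as an immediate consequence of Theorem \ref{principal}, and your argument---orbit equivalence gives $C(p,X)\sim C(q,X)$ trivially via the induced hyperspace homeomorphism, while the converse follows by applying Theorem \ref{principal} with $Y=X$---is precisely that intended deduction, just written out in full. Nothing is missing.
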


To finish this paper we present some open problems.

\begin{question}
If $X$ is a dendrite and $p\in X$, has $(X,p)$ unique hyperspace $C(p,X)$ in the class of dendrites?
\end{question}

\begin{question}
If $X$ is a dendroid and $p\in X$, has $(X,p)$ unique hyperspace $C(p,X)$ in the class of dendroids?
\end{question}

\begin{question}
Regarding Example \ref{todoslosarboles}, are there a continuum $X$ and $p\in X$ such that $(X,p)$ has unique hyperspace in the class of continua?
\end{question}

\begin{question}
Are there a continuum $X$ and two points $p,q\in X$ such that $(X,p)$ has unique hyperspace $C(p,X)$, whereas $(X,q)$ does not has it? 
\end{question}

\section{Acknowledgements}

The authors wish to thank to Hugo Villanueva, Eli Vanney Roblero and Sergio Guzm\'an for the fruitful discussions about the topic of this paper.


\end{document}